\numberwithin{equation}{section}
\newtheorem{theorem}{Theorem}[section]
\newtheorem{definition}[theorem]{Definition}
\newtheorem{proposition}[theorem]{Proposition}
\newtheorem{corollary}[theorem]{Corollary}
\newtheorem{lemma}[theorem]{Lemma}
\newtheorem{remark}[theorem]{Remark}
\title[Global existence of reaction-diffusion systems]{Close-to-equilibrium regularity for reaction-diffusion systems}
\author{Bao Quoc Tang}
\address{Institute for Mathematics and Scientific Computing,\\
University of Graz, Austria}
\email{quoc.tang@uni-graz.at}
\begin{document}
\begin{abstract} 
The close-to-equilibrium regularity of solutions to a class of reaction-diffusion systems is investigated. The considered systems typically arise from chemical reaction networks and satisfy a complex balanced condition. Under some restrictions on spatial dimensions ($d\leq 4$) and order of nonlinearities ($\mu =  1 + 4/d $), we show that if the initial data is close to a complex balanced equilibrium in $L^2$-norm, then classical solutions are shown global and converging exponentially to equilibrium in $L^{\infty}$-norm. Possible extensions to higher dimensions and order of nonlinearities are also discussed. The results of this paper improve the recent work [M.J. C\'aceres and J.A. Ca\~nizo, Nonlinear Analysis: TMA 159 (2017): 62-84].
\end{abstract}

\maketitle

\medskip

\noindent
{\bf Classification AMS 2010}: 35K57, 35B40, 35Q92, 80A30, 80A32

\medskip

\noindent
{\bf Keywords:} Reaction-diffusion systems; Close-to-equilibrium regularity; Chemical reaction networks; Complex balanced condition.   

\section{Introduction and main results}
This paper deals with global existence of classical solutions to a class of reaction-diffusion systems with initial data being close to an equilibrium. The considered systems typically arise from chemical reaction networks. More precisely, we consider $N$ chemical substances $S_1, \ldots, S_N$, which react in the $R$ reactions where the $r$-th reaction has the form
\begin{equation}\label{reactions}
	y_{r,1}S_1 + \ldots + y_{r,N}S_N \xrightarrow{k_r} y_{r,1}'S_1 + \ldots + y_{r,N}'S_N
\end{equation}
for $r=1,\ldots, R$. Here $y_r = (y_{r,1}, \ldots, y_{r,N}), y_{r}' = (y_{r,1}', \ldots, y_{r,N}')\in (\{0\}\cup [1,\infty))^N$ are stoichiometric coefficients, and $k_r>0$ is the reaction rate constant.
We will assume naturally that there exists at least one $i\in \{1, \ldots, N\}$ such that $y_{r,i} \not= y_{r,i}'$. With a slight abuse of notation we will rewrite the reaction \eqref{reactions} as
\begin{equation*}
	y_r \xrightarrow{k_r} y_r'
\end{equation*}
in which $y_r$ and $y_r'$ are called {\it complexes} with $y_r$ is the reactant and $y_r'$ is the product of the corresponding reaction. Denote by $\mathcal C = \{y_r, y_r':\, r = 1,\ldots, R\}$ the set of all complexes. Note that each complex $y\in \mathcal C$ can be a reactant, a product or both (in possibly different reactions).

To set up a reaction-diffusion system moelling \eqref{reactions} we assume that the reactions take place in a bounded vessel $\Omega \subset \mathbb R^d$ with smooth boundary $\partial\Omega$ (e.g. $C^{2+\epsilon}$ for $\epsilon>0$). For $i=1,\ldots, N$, denote by $u_i(x,t)$ the concentration $S_i$ at position $x\in\Omega$ and at time $t\ge 0$. Assume moreover that each substance $S_i$ diffuses at a positive constant rate $d_i>0$. The corresponding reaction-diffusion system for $u = (u_1, \ldots, u_N)$ reads as
\begin{equation}\label{system}
	\begin{aligned}
		\partial_t u_i - d_i\Delta u_i &= f_i(u), && x\in\Omega, \quad t>0,\\
		\nabla u_i \cdot \nu &= 0, && x\in\partial\Omega, \quad t>0,\\
		u_i(x,0) &= u_{i,0}(x), && x\in\Omega
	\end{aligned}
\end{equation}
for all $i=1,\ldots, N$, where $\nu$ is the outward normal on $\partial\Omega$ and the initial data $u_{i,0}$ are nonnegative. Here the homogeneous Neumann boundary condition means that the system is closed. The reactions $f_i(u)$ are derived from all the chemical reactions \eqref{reactions} using the {\it law of mass action}, that is for $i=1,\ldots, N$,
\begin{equation}\label{nonlinearties}
	f_i(u) = \sum_{r=1}^{R}k_r(y_{r,i}' - y_{r,i})u^{y_r} \quad \text{ with } \quad u^{y_r} = \prod_{i=1}^{N}u_i^{y_{r,i}}.
\end{equation}

Note that the nonlinearities are of polynomial type. We denote by $\mu$ the highest order of nonlinearities \eqref{nonlinearties}, i.e.
\begin{equation}\label{mu}
	\mu = \max_{y\in \mathcal C}|y| \quad \text{ with } \quad |y| = \sum_{i=1}^{N}|y_i| \text{ for any } y\in \mathbb R^N.
\end{equation}
Obviously we are interested in the case superlinear, i.e. $\mu > 1$.
It follows straightforwardly there exists $K>0$ such that
\begin{equation}\label{growth}
	|f_i(u)| \leq K(|u|^{\mu} + 1) \quad \text{ for all } \quad i=1,\ldots, N \text{ and all } u\in \mathbb R^N.
\end{equation}

{It is frequent that system \eqref{system} possesses a set of conservation laws. Denote by $m = \mathrm{codim}\{(y_r' - y_r)_{r=1,\ldots, R}\}$. Then if $m>0$ we can find a matrix $\mathbb Q\in \mathbb R^{m\times N}$ such that $\mathbb Q f(u) = 0$ for all $u\in \mathbb R^N$. Here $f(u) = (f_1(u), \ldots, f_N(u))$. This in combination with the homogeneous Neumann boundary condition of \eqref{system} leads (formally) to $m$ conservation laws of the form
\begin{equation*}
	\mathbb Q\,\overline{u}(t) = \mathbb Q\,\overline{u}_0  \quad \text{ for all } \quad t>0.
\end{equation*}
where $\overline{u} = (\overline{u}_1,\ldots, \overline{u}_N)$ with $\overline{u}_i = \frac{1}{|\Omega|}\int_{\Omega}u_i(x)dx$ (see Section \ref{basic} for more details). 
}

\medskip
Global existence of solutions is one of the most important questions in studying reaction-diffusion systems (if not any other PDE models). Concerning system \eqref{system}-\eqref{nonlinearties}, due to the linear diffusion and polynomial-type nonlinearities, the local existence of solutions follows from standard theory of parabolic systems (see e.g. \cite{Ama85}). The global existence of solutions to \eqref{system}-\eqref{nonlinearties} on the other hand is a subtle task due to the lack of suitable {\it a priori} estimates (note that the maximum principle fails to apply to reaction-diffusion systems except very special cases). This issue has been extensively addressed in the literature, see e.g. \cite{CDF14, CV09, DFPV07, FL16, FLS16, GH97, GV10, Rot87, Tan17} and the survey \cite{Pie-Survey}. 

\medskip
Let us briefly mention the recent advances concerning the global existence of \eqref{system}-\eqref{nonlinearties}.
\begin{itemize}
	\item In \cite{GV10}, by De Giorgi's method \eqref{system} was proved to have global classical solutions in one and two dimensions with the nonlinearities of order three and two, respectively (which means $d = 1$ and $\mu = 3$ or $d = 2$ and $\mu = 2$). These results have been re-proved (and slightly improved) in \cite{Tan17} using simpler arguments. It was also proved in \cite{CV09}, again by De Giorgi's method, that if the order of nonlinearities is strictly sub-quadratic (i.e. $\mu <2$), then classical solutions exist globally in any dimension. See also \cite{GH97} for quadratic systems in heterostructure.
	\item In \cite{CDF14} and later in \cite{FLS16} the system \eqref{system} in higher dimensions and higher order of nonlinearities are shown to possess global classical solutions provided the diffusion coefficients $d_1, \ldots, d_N$ are "close to each other", i.e.
	\begin{equation*}
		\max\{d_i\} - \min\{d_i\} \text{ is small enough}.
	\end{equation*}
	\item Concerning weaker notions of solutions, it was shown in \cite{DFPV07,Pie03} that \eqref{system} with quadratic nonlinearities has global weak solutions in any dimensions, by using a duality method. An even weaker solution called "renormalized solution" was proved global in \cite{Fis15} for any dimension and any order of nonlinearities.
\end{itemize}
We remark that despite recent above mentioned advances, the global existence of classical solutions to \eqref{system} is widely open. For example, it remains unknown whether the "four-species" system modelling
\begin{equation*}
	S_1 + S_3 \leftrightharpoons S_2 + S_4
\end{equation*}
in dimension three (and higher) possesses global classical solutions or not (without assuming the closeness of the diffusion coefficients).

Recently, C\'aceres and Ca\~nizo in \cite{MC16} investigated a different regime of \eqref{system} so-called close-to-equilibrium regularity. The main idea is that when the solution stays in a small neighbourhood of an equilibrium then it is possible that the linear part is dominating the behaviour of the system, and this helps 	consequently to obtain more regularity and hence the global existence of classical solutions. The authors proved in \cite{MC16} that for dimension $d\leq 4$, under the assumption that the nonlinearities are at most quadratic and \eqref{system} satisfies a detailed balanced condition (see Definition \ref{complex-balance}), if initial data is close to an equilibrium in $L^2$-norm then the classical solutions exists globally and converge exponentially to the equilibrium in $L^{\infty}$-norm.

The aim of this paper is to extend the results in \cite{MC16} to systems with complex balance condition and with higher order of nonlinearities. 

Our main result reads as follows.
\begin{theorem}\label{theo:main}
	{
	Let $\Omega\subset \mathbb R^d$ be a bounded domain with smooth boundary $\partial\Omega$ (e.g. $C^{2+\epsilon}$ for $\epsilon>0$). 
	Assume that $d\leq 4$ and the order of nonlinearities $\mu$ (defined in \eqref{mu}) satisfies
\begin{center}
  \begin{tabular}{ |c|c|c|c|}
    \hline
   $d=1$ & $d=2$ & $d=3$ & $d=4$ \\ \hline
    $\mu= 5$ & $\mu= 3$ & $\mu= 7/3$ & $\mu= 2$\\
    \hline
  \end{tabular}
\end{center}
or equivalently $\mu = \frac{d+4}{d}$ for $d\leq 4$.

Assume that the system \eqref{system} is complex balanced, which implies for any positive initial mass there exists a unique corresponding strictly positive complex balanced equilibrium (see Definition \ref{complex-balance} and Proposition \ref{unique}). 

Now let $u_{\infty}$ be any strictly positive complex balanced equilibrium. Then there exists an $\varepsilon>0$ small enough such that for any nonnegative initial data $u_0 = (u_{i,0})\in L^{\infty}(\Omega)^N$ satisfying $\mathbb Q\,\overline{u}_0 = \mathbb Q\,u_{\infty}$, and being close to $u_{\infty} \in (0,\infty)^N$ in $L^2$-norm, i.e.
\begin{equation*}
	\sum_{i=1}^{N}\|u_{i,0} - u_{i,\infty}\|_{L^2(\Omega)} \leq \varepsilon,
\end{equation*}
the classical solution to \eqref{system} exists globally, and converges to $u_{\infty}$ in $L^{\infty}$-norm exponentially, i.e.
\begin{equation*}
	\sum_{i=1}^{N}\|u_i(t) - u_{i,\infty}\|_{L^{\infty}(\Omega)} \leq Ce^{-\lambda t} \quad \text{ for all } \quad t>0,
\end{equation*}
with $C>0$ and $\lambda>0$ are constants.}
\end{theorem}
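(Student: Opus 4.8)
The plan is to pass to the perturbation $v=u-u_\infty$, prove exponential decay of $v$ first in $L^2$ through an entropy-weighted energy estimate that closes \emph{exactly} because $\mu=\tfrac{d+4}{d}$, and then bootstrap this to exponential decay in $L^\infty$ via Duhamel's formula and the smoothing of the linearised semigroup; global existence then follows from the resulting uniform $L^\infty$ bound. Since $f(u_\infty)=0$, the components $v_i$ solve $\partial_t v_i-d_i\Delta v_i=(\mathcal Lv)_i+g_i(v)$ with homogeneous Neumann data and $v_i(0)=u_{i,0}-u_{i,\infty}$, where $\mathcal L=\mathrm{diag}(d_i\Delta)+Df(u_\infty)$ and $g(v)=f(u_\infty+v)-Df(u_\infty)v$ collects the quadratic and higher terms of the polynomial $f$, so that $|g_i(v)|\le C(|v|^2+|v|^\mu)$ for all $v\in\mathbb R^N$. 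As $u_{i,0}\in L^\infty(\Omega)$ and the diffusion is linear with polynomial reactions, the standard theory (see \cite{Ama85}) yields a unique classical solution on a maximal interval $[0,T_{\max})$, smooth for $t>0$, with the blow-up criterion $T_{\max}<\infty\Rightarrow\limsup_{t\to T_{\max}}\|u(t)\|_{L^\infty}=\infty$; moreover $u(t)\ge0$ by quasi-positivity of mass-action kinetics, and $\mathbb Q\overline v(t)\equiv\mathbb Q\overline v_0=0$, so $v$ stays in $\mathbb V:=\{w\in L^2(\Omega)^N:\ \mathbb Q\overline w=0\}$. It therefore suffices to obtain a uniform-in-time $L^\infty$ bound together with the claimed exponential decay.

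Step one: decay in $L^2$. Let $\|w\|_*^2:=\sum_i\int_\Omega w_i^2/u_{i,\infty}$, the Hessian of the relative entropy at $u_\infty$, which is equivalent to $\|\cdot\|_{L^2}^2$. The complex balanced structure (Definition \ref{complex-balance}, Proposition \ref{unique}) makes $Df(u_\infty)$ dissipative for $\langle\cdot,\cdot\rangle_*$ on the stoichiometric subspace and, combined with the Neumann--Laplacian spectral gap, makes $\mathcal L$ the generator of an exponentially decaying analytic semigroup on $\mathbb V$; testing the $v$-equation against $v_i/u_{i,\infty}$ and using this gives, for $v\in\mathbb V\cap H^1$,
\[
  \frac{d}{dt}\|v\|_*^2+c_1\|\nabla v\|_{L^2}^2+2\lambda_0\|v\|_*^2\ \le\ C\Bigl|\int_\Omega g(v)\cdot v\,dx\Bigr|\ \le\ C\sum_{k=2}^{\mu}\|v\|_{L^{k+1}}^{k+1}.
\]
Gagliardo--Nirenberg yields $\|v\|_{L^{k+1}}^{k+1}\le C\|\nabla v\|_{L^2}^{\theta_k(k+1)}\|v\|_{L^2}^{(1-\theta_k)(k+1)}+C\|v\|_{L^2}^{k+1}$ with $\theta_k(k+1)=\tfrac{d(k-1)}{2}\le\tfrac{d(\mu-1)}{2}=2$, equality holding precisely at $k=\mu=\tfrac{d+4}{d}$; since $k\ge2$, Young's inequality (unnecessary for $k=\mu$) together with the smallness of $\|v\|_{L^2}$ absorbs the whole right-hand side into $\tfrac{c_1}{2}\|\nabla v\|_{L^2}^2+\lambda_0\|v\|_*^2$. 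Hence, once $\delta_0>0$ is fixed accordingly and $\varepsilon$ is so small that $\|v_0\|_*\le\delta_0/2$, a standard continuation argument keeps $\|v(t)\|_*\le\delta_0$ on all of $[0,T_{\max})$, whence $\frac{d}{dt}\|v\|_*^2+\tfrac{c_1}{2}\|\nabla v\|_{L^2}^2+\lambda_0\|v\|_*^2\le0$, and therefore $\|v(t)\|_{L^2}\le C\varepsilon e^{-\lambda t}$ and $\int_0^\infty e^{2\lambda s}\|\nabla v(s)\|_{L^2}^2\,ds\le C\varepsilon^2$ for some $\lambda>0$.

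Step two: decay in $L^\infty$ and global existence. The parabolic Gagliardo--Nirenberg inequality turns the bounds of Step one into $\int_0^\infty e^{cs}\int_\Omega|v|^{\mu+1}\,dx\,ds\le C\varepsilon^{\mu+1}$ for some $c>0$ --- here it is again the identity $\mu+1=2+\tfrac4d$ that makes this exactly the available space--time integrability. Writing Duhamel's formula $v(t)=e^{t\mathcal L}v_0+\int_0^te^{(t-s)\mathcal L}g(v(s))\,ds$ and using the smoothing estimate $\|e^{\tau\mathcal L}w\|_{L^q}\le C\bigl(1+\tau^{-\frac d2(\frac1p-\frac1q)}\bigr)e^{-\lambda\tau}\|w\|_{L^p}$ on $\mathbb V$ (analyticity of $\mathcal L$ and ultracontractivity of the Neumann heat semigroup), one then runs a finite bootstrap: with $\|g(v(s))\|_{L^r}\le C(\|v(s)\|_{L^{2r}}^2+\|v(s)\|_{L^{\mu r}}^\mu)$ and a chain $2<q_1<\cdots<q_n=\infty$ whose consecutive singular exponents $\tfrac d2(\tfrac1{q_{j-1}}-\tfrac1{q_j})$ stay below $1$ (possible for $d\le4$ since $\mu=\tfrac{d+4}{d}\le5$), one upgrades $\|v(t)\|_{L^{q_{j-1}}}\le Ce^{-\lambda_{j-1}t}$ to $\|v(t)\|_{L^{q_j}}\le Ce^{-\lambda_jt}$, the time-convolutions converging because the singular exponents are $<1$ and the nonlinear input already decays, while the smallness of $\varepsilon$ keeps these terms subordinate; the first step off $L^2$ uses the $L^{\mu+1}_{t,x}$-gain above and is the critical one. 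In particular $\sup_{0<t<T_{\max}}\|v(t)\|_{L^\infty}<\infty$, so $T_{\max}=\infty$, and $\sum_i\|u_i(t)-u_{i,\infty}\|_{L^\infty}\le Ce^{-\lambda_nt}$, which is the assertion.

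The crux is twofold. The analytic heart is the arithmetic of criticality: one must check that the Gagliardo--Nirenberg exponent of the top reaction term $\int|v|^{\mu+1}$ equals exactly $2$---which happens precisely for $\mu=\tfrac{d+4}{d}$---so that this term is absorbed by the \emph{diffusion} dissipation while the leftover power of $\|v\|_{L^2}$ supplies the smallness, and that the same identity delivers exactly the integrability $v\in L^{\mu+1}_{t,x}$ that launches the $L^\infty$-bootstrap; any larger $\mu$ breaks both. The more structural obstacle is establishing the dissipativity of $Df(u_\infty)$ in the entropy-weighted inner product and the spectral gap of $\mathcal L$ on $\mathbb V$ under the \emph{complex} balanced hypothesis, which is strictly weaker than the detailed balanced one of \cite{MC16} (detailed balance makes $Df(u_\infty)$ symmetrisable, complex balance only quasi-symmetric); this is where the extension beyond \cite{MC16} is genuinely earned, and it relies on linearising the entropy--entropy-production structure of complex balanced networks at $u_\infty$. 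A last technical nuisance is that $v_0$ is only $L^2$-small while possibly $L^\infty$-large, so the bootstrap constants---but not the decay rate---must be allowed to depend on $\|v_0\|_{L^\infty}$, and one checks that the $L^2$-estimate by itself already rules out any $L^2$-blow-up before the parabolic smoothing takes effect.
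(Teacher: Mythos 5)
Your Step one is essentially the paper's Lemma \ref{decay-L2}, and your identification of the critical arithmetic $\theta_k(k+1)=d(k-1)/2\le 2$ is exactly right; but two things are asserted there that must actually be proved. First, the dissipativity of $Df(u_\infty)$ in the weighted inner product and the resulting spectral gap on $\{\mathbb Q\,\overline v=0\}$ are the content of Lemmas \ref{3.1} and \ref{spectral-gap}: one has to verify the identity $\sum_i L_iv\,v_i/u_{i,\infty}=-\tfrac12\sum_r k_ru_\infty^{y_r}\bigl(\sum_i(y'_{r,i}-y_{r,i})v_i/u_{i,\infty}\bigr)^2$, which reduces to $\sum_r k_ru_\infty^{y_r}y_{r,i}y_{r,j}=\sum_r k_ru_\infty^{y_r}y'_{r,i}y'_{r,j}$ and genuinely uses the complex balance condition, and then show strict positivity of the averaged reaction form on $\{\mathbb Q\,\overline v=0\}$ via the invertibility of $\mathbb Q\,\mathrm{diag}(u_\infty)\,\mathbb Q^{\top}$. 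You correctly locate this as the crux of the extension beyond \cite{MC16} but leave it unproved. Second, since the stoichiometric coefficients may be non-integer, the Taylor remainder is only $O(|v|^{1+\delta})$ with $\delta\in(0,1]$ (Lemma \ref{g}), not $O(|v|^2)$; this is harmless but changes the exponents.

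The genuine gap is in Step two. Because $\mu=1+4/d$ makes $L^2$ exactly scaling-critical for the nonlinearity, the Duhamel bootstrap as you describe it cannot leave $L^2$: the forcing obeys $\|g(v(s))\|_{L^{q/\mu}}\le C\|v(s)\|_{L^{q}}^{\mu}$, so the singular exponent of the first step is $\tfrac d2\bigl(\tfrac{\mu}{2}-\tfrac{1}{q_1}\bigr)$, not $\tfrac d2\bigl(\tfrac12-\tfrac1{q_1}\bigr)$, and requiring it to be $<1$ forces $q_1<2$ for every $d\le 4$. Your proposed rescue, the space--time bound $v\in L^{\mu+1}_{t,x}$, is likewise exactly critical: it gives $g(v)\in L^{(\mu+1)/\mu}(Q_T)$ with $(\mu+1)/\mu=2(d+2)/(d+4)<(d+2)/2$, and Lemma \ref{heat-regularity}(i) then returns only $v\in L^{s}(Q_T)$ for $s<\tfrac{(d+2)p}{d+2-2p}=\mu+1$ --- no gain, the iteration is stationary. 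Smallness of $\varepsilon$ does not repair this, since the obstruction is the non-integrable singularity $(t-s)^{-1}$ rather than the size of a constant; exploiting criticality would require a Kato--Weissler fixed point in scaling-invariant norms, which you do not set up. The paper breaks the criticality with a different device: the $L^p$ energy estimates \eqref{e4}--\eqref{e6} for $p>2$, in which the Gagliardo--Nirenberg bound $\|v_i\|_{\mu+p-1}^{\mu+p-1}\le C\|v_i^{p/2}\|_{H^1(\Omega)}^2\|v_i\|_2^{4/d}$ lets the already-established smallness of $\|v\|_2$ absorb the nonlinearity into the dissipation $\|\nabla(v_i^{p/2})\|_2^2$ at every level $p$. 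This produces strictly supercritical bounds ($v\in L^8(Q_{t_0,T})$ for $d=1$, $L^{r}(Q_{t_0,T})$ for all $r<8$ for $d=2$, $L^\infty(t_0,T;L^p)$ for all $p$ for $d=3,4$), after which Lemmas \ref{heat-regularity} and \ref{heat-regularity-2} give $\|v\|_{L^\infty(Q_{t_0,T})}\le C_T$ with $C_T$ polynomial in $T$, and interpolation of this against the exponentially decaying $L^2$ norm yields the exponential $L^\infty$ decay. Without this (or an equivalent) supercritical input your bootstrap does not close.
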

Theorem \ref{theo:main} improves the results of \cite{MC16} in the following senses:
\begin{itemize}
	\item Firstly, it allows to treat systems with higher order nonlinearities, in particular in one and two dimensions. The main idea which leads to this improvement is to utilize the Gagliardo-Nirenberg inequality, especially in one and two dimensions, which helps to treat nonlinearities of order higher than two. Note that the idea was also used in the recent paper \cite{Tan17} to prove the global existence of classical solutions to reaction-diffusion systems in one and two dimensions (without the assumption of initial data being close to equilibrium).
	\medskip
	\item Secondly, it applies to complex balanced systems which are more general than detailed balanced systems in \cite{MC16} (see Definition \ref{complex-balance}). One of the main steps in proving Theorem \ref{theo:main} is obtaining a spectral gap for the linearised operator. This step was done in \cite{MC16} thanks to a natural Lyapunov functional of \eqref{system} inherited from the detailed balanced condition. Extending this to systems with complex balance condition requires nontrivial calculations (see Lemma \ref{spectral-gap}).
\end{itemize}

Last but not least, Theorem \ref{theo:main} successfully treats systems with boundary equilibria, i.e. equilibria belonging to $\partial\mathbb R_+^N$ (see Remark \ref{boundary-equilibrium}). It is remarked that for general complex balanced system \eqref{system} {\it without boundary equilibria}, it was proved in \cite{DFT16,FT17,Mie16} that any renoramlised solution converges exponentially in $L^1$-norm to equilibrium. The situation is quite different with the occurrence of boundary equilibria, since in such cases the methods in these forementioned works do not apply. It is however conjectured that for any complex balanced system the strictly positive equilibrium is the only attracting point. This is now named {\it Global Attractor Conjecture} and has remains as one of the most important open question in chemical reaction networks.
Theorem \eqref{theo:main} shows that despite of boundary equilibria, any solution with initial data being close enough to any strictly positive complex balanced equilibrium in $L^2$-norm, converges exponentially to that	 equilibrium. The results of this paper hence are also the extensions of local stability of complex balanced systems in ODE settings (see \cite{HJ72,SJ08}).

\medskip
Let us briefly describe the method used in this paper. First, we consider the linearised system of \eqref{system} around a strictly positive complex balanced equilibrium. By utilising the complex balanced condition, we obtain a spectral gap for the linearised system. Due to the restrictions on dimensions and order of nonlinearities, it is then shown that the linear part is dominating in the behaviour of \eqref{system} in the close-to-equilibrium regime, that means the $L^2$-norm of solution converges exponentially to the equilibrium. After that, by utilising the Gagliardo-Nirenberg inequality and smoothing effect of the heat operator, we get the global existence of classical solution whose $L^\infty$-norm grows at most polynomially in time, which in a combination with the exponential convergence in $L^2$-norm leads to the global convergence of solutions to the complex balanced equilibrium in $L^{\infty}$-norm.

\medskip
The restriction $d\leq 4$ and corresponding $\mu =  1 + \frac 4d$ is due to the fact that small $L^2$-initial data leads only to control of $L^2$-norm of solutions, which is only enough with the current techniques to control the higher norm under the mentioned assumptions on dimension $d$ and the order of nonlinearity $\mu$. The extension to arbitrary dimension seems to need more delicate analysis instead of using only the boundedness in $L^2$ (see e.g. \cite{PS00} for a system which has solutions being bounded in $L^2$ but blowing up in $L^{\infty}$!). For extending this work to higher dimension $d\geq 5$ and $\mu \geq 3$, it is naturally expected that the initial data should be close to equilibrium in $L^p$-norm for some $p>2$ depending on $d$ and $\mu$. Still the main obstacle is to obtain a global bound in time of $L^p$-norm (or $L^{p+\varepsilon}$-norm) of the solution. We discuss this extension in more details in Section \ref{extension}.

\medskip
{\bf Notations:} Throughout this paper, we denote by $\|\cdot\|_p$ the norm in $L^p(\Omega)$ for $1\leq p \leq \infty$. For any other Banach space $X$ then $\|\cdot\|_X$ is used for its norm. The inner product in $L^2(\Omega)$ is written as $\langle \cdot, \cdot \rangle$. For any $T> t_0 \ge 0$ we denote by $Q_{t_0,T} = \Omega\times (t_0,T)$ and $L^p(Q_{t_0,T}) = L^p(t_0,T;L^p(\Omega))$. When $t_0=0$ we simply write $Q_T = Q_{0,T}$.

\medskip
The rest of the paper is organised as follows: In section \ref{basic} we recall basic concepts concerning complex balanced chemical reaction networks. Section \ref{sec:spectral-gap} deals with the linearised system around a strictly positive complex balanced equilibrium and proved that the linearised system has a spectral gap. Section \ref{sec:main} is devoted for the proof of Theorem \ref{theo:main}. Finally, we discuss in the last section possible extensions to higher dimensions.

\section{Complex balanced chemical reaction networks}\label{basic}
In this section, we gather the basic concepts of chemical reaction networks with complex balanced condition. For more details, the interested reader is referred to \cite{Fei79,Fei87} for the ODE setting and also \cite{DFT16,FT17} for the PDE setting. Recalling that we consider $N$ chemical substances $S_1, S_2, \ldots, S_N$ reacting in $R$ reactions of the form
\begin{equation*}
	y_{r,1}S_1 + \ldots + y_{r,N}S_N \xrightarrow{k_r} y_{r,1}'S_1 + \ldots + y_{r,1}'S_N \quad \text{ or shortly } \quad y_r \xrightarrow{k_r} y_r'
\end{equation*}
where $y_r, y_r' \in \mathbb N^N$ are stoichiometric coefficients. Under the assumptions of no out-flux and using the law of mass action, one arrives at the reaction-diffusion system \eqref{system}. Since the system under consideration is closed, there are often conservation of masses. Indeed, denote by $W = (y_r' - y_r)_{r=1,\ldots, R} \in \mathbb R^{N\times R}$ the Wegscheider's matrix. Let $m = \mathrm{codim}(W)$. In case $m>0$, there exists a matrix $\mathbb Q\in \mathbb R^{m\times N}$ whose rows are basis of $\ker(W^\top)$. Since 
\begin{equation}
	f(u) = (f_1(u), \ldots, f_N(u)) = \sum_{r=1}^{R}k_r(y_r' - y_r)u^{y_r} \in \mathrm{range}(W)
\end{equation}
we have, thanks to the homogeneous Neumann boundary condition, formally
\begin{equation*}
	\frac{d}{dt}\int_{\Omega}\mathbb Q\,udx = \int_{\Omega}\mathbb Q\,f(u)dx = 0,
\end{equation*}
thus
\begin{equation}\label{conservation}
	\mathbb Q\,\overline{u}(t) = \mathbb Q\,\overline{u}_0 =: M \in \mathbb R^m \quad \text{ for all } t>0
\end{equation}
where $\overline{u} = (\overline{u_1}, \ldots, \overline{u_N})$ with $\overline{u_i} = \frac{1}{|\Omega|}\int_{\Omega}u_idx$, is the vector of averages.
In the case $m = 0$, then the system \eqref{system} has no conservation laws. {Note that we can change signs of some rows of $\mathbb Q$ if necessary to have $M$ positive (componentwise). Therefore, from now on, we always consider positive initial mass.} 

\medskip
There are two important classes of (chemical) equilibrium for \eqref{system}: \textit{detailed balanced equilibrium} and \textit{complex balanced equilibrium}.
\begin{definition}\label{complex-balance}
	Let $u_{\infty}\in [0,\infty)^{N}$. 
	\begin{itemize}
		\item $u_{\infty}$ is called a {\normalfont detailed balanced equilibrium} for \eqref{system} if for each forward reaction $y \xrightarrow{k_f} y'$ there exists a corresponding backward reaction $y' \xrightarrow{k_b} y$ and these two reactions are balanced at $u_{\infty}$, i.e.
		\begin{equation*}
			k_fu_{\infty}^y = k_b u_{\infty}^{y'}.
		\end{equation*}
		\item $u_{\infty}$ is called a {\normalfont complex balanced equilibrium} for \eqref{system} if at this equilibrium the total in-flow and out-flow are balanced for each complex, i.e. for any $y\in \mathcal C$ there holds
			\begin{equation}\label{CB-condition}
				\sum_{\{r:\; y_r = y\}}k_ru_{\infty}^{y_r} = \sum_{\{r:\; y_r' = y\}}k_ru_{\infty}^{y_r}.
			\end{equation}
	\end{itemize}
\end{definition}

It is straightforward that 
\begin{equation*}
	u_{\infty} \text{ is a detailed balanced equilibrium} \quad \Longrightarrow \quad u_{\infty} \text{ is a complex balanced equilibrium}
\end{equation*}
but the reverse is in general not true.

If system \eqref{system} admits a strictly positive complex balanced equilibrium, then it is called a {\it complex balanced system}. Note that the condition \eqref{CB-condition} does not give a unique complex balanced equilibrium but a manifold of equilibria instead. The uniqueness of (positive) complex balanced equilibrium is nevertheless determined via the set of conservation laws \eqref{conservation}.
\begin{proposition}[Uniqueness of positive equilibrium]\cite{Fei79,Fei87}\label{unique}
	Assume that \eqref{system} is complex balanced. Then for each positive initial mass vector $0\not= M \in \mathbb R^m_+$ in \eqref{conservation}, there exists a unique {\normalfont strictly positive} complex balanced equilibrium $u_{\infty}\in (0,\infty)^N$ to \eqref{system}.
\end{proposition}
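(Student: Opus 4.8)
The plan is to reconstruct the classical Horn--Jackson--Feinberg argument (\cite{Fei79,Fei87}) in the notation of Section~\ref{basic}. Since \eqref{system} is complex balanced, fix once and for all one strictly positive complex balanced equilibrium $\widetilde u\in(0,\infty)^N$. It is convenient to record the reaction term in graph form: write $\mathcal C=\{C_1,\dots,C_n\}$ for the set of complexes, let $Y\in\mathbb R^{N\times n}$ be the matrix with columns $C_1,\dots,C_n$, let $\Psi(u)\in\mathbb R^n$ be the monomial vector $\Psi(u)_j=u^{C_j}$, and let $A_k\in\mathbb R^{n\times n}$ be the weighted graph-Laplacian of the directed reaction graph on the vertex set $\mathcal C$ (the edge $C_j\to C_i$ carrying the total rate constant of the reactions $C_j\to C_i$). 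Then $f(u)=Y A_k\Psi(u)$; moreover, with $S:=\mathrm{range}(W)=\mathrm{span}\{y_r'-y_r:\ r=1,\dots,R\}$ the stoichiometric subspace, the rows of $\mathbb Q$ form a basis of $S^\perp$, so that $\ker\mathbb Q=S$ and $\{u>0:\ \mathbb Q u=M\}=(u_\ast+S)\cap(0,\infty)^N$ whenever $\mathbb Q u_\ast=M$; and the complex balance condition \eqref{CB-condition} reads precisely $A_k\Psi(u_\infty)=0$. Two things must then be shown: that each admissible mass vector is realised by such an equilibrium, and that it is then unique.

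The heart of the matter is a structural description of $\ker A_k$. First, the existence of a strictly positive vector $\Psi(\widetilde u)$ in $\ker A_k$ forces the network to be \emph{weakly reversible}, i.e.\ every linkage class (connected component of the reaction graph) is strongly connected: $A_k$ has nonnegative off-diagonal entries and zero column sums, and such a matrix admits a strictly positive null vector only if it possesses no proper terminal strong component. Second, for a strongly connected weighted digraph the Matrix--Tree theorem shows that the kernel of its Laplacian is one-dimensional, spanned by a vector of strictly positive spanning-tree constants. Since no reaction joins two distinct linkage classes, $A_k$ is block-diagonal along them, so $\ker A_k$ possesses a basis of strictly positive vectors with pairwise disjoint supports, one per linkage class. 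Consequently, if $a,b\in(0,\infty)^N$ are \emph{both} complex balanced, then $\Psi(a)$ and $\Psi(b)$, restricted to any given linkage class, are positive multiples of the \emph{same} vector; hence the ratio $\Psi(a)_j/\Psi(b)_j=a^{C_j}/b^{C_j}=e^{\langle C_j,\,\ln a-\ln b\rangle}$ is constant along each linkage class. As $y_r$ and $y_r'$ lie in the same linkage class for every reaction $r$, this yields $\langle y_r'-y_r,\ln a-\ln b\rangle=0$ for all $r$, that is
\[
  \ln a-\ln b\in S^\perp .
\]
I expect this lemma---combining weak reversibility with the Matrix--Tree formula---to be the main obstacle; what follows is convexity bookkeeping.

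Uniqueness inside a compatibility class is then immediate. If $a,b\in(0,\infty)^N$ are complex balanced with $\mathbb Q a=\mathbb Q b$, then $a-b\in\ker\mathbb Q=S$ while $\ln a-\ln b\in S^\perp$ by the lemma, so
\[
  0=\langle a-b,\ \ln a-\ln b\rangle=\sum_{i=1}^N(a_i-b_i)(\ln a_i-\ln b_i),
\]
and since every summand is $\ge 0$ by monotonicity of $\ln$ each must vanish, forcing $a=b$. Conversely, complex balance is preserved under the map sending $\widetilde u$ to the vector $\widetilde u^{(w)}$ with components $\widetilde u_i e^{w_i}$, for any $w\in S^\perp$: for complexes in the same linkage class $C_j-C_{j'}\in S$, so $e^{\langle C_j,w\rangle}$ is constant on each linkage class, whence $\Psi(\widetilde u^{(w)})$ is, block by block, a positive scalar multiple of $\Psi(\widetilde u)$ and still lies in $\ker A_k$. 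Thus the set of strictly positive complex balanced equilibria is exactly $\{\widetilde u^{(w)}:w\in S^\perp\}$.

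Finally, to realise a prescribed $M$ in the interior of the cone $\mathbb Q([0,\infty)^N)$ --- the ``positive initial mass vectors'' of the statement --- minimise the free energy $G(u)=\sum_{i=1}^N(u_i\ln(u_i/\widetilde u_i)-u_i+\widetilde u_i)$ over $\Sigma_M:=\{u\ge 0:\mathbb Q u=M\}$. Here $G$ is strictly convex, and each summand is nonnegative and superlinear in $u_i$, so $G$ is coercive on the closed set $\Sigma_M$ and attains a minimiser $u^\ast$, unique by strict convexity; the slope $\partial_{u_i}G=\ln(u_i/\widetilde u_i)\to-\infty$ as $u_i\downarrow 0$ rules out any boundary point, so $u^\ast\in(0,\infty)^N$, and first-order optimality on the affine set $\widetilde u+S$ gives $\nabla G(u^\ast)=\ln(u^\ast/\widetilde u)\in S^\perp$. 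By the preceding paragraph $u^\ast$ is a strictly positive complex balanced equilibrium with $\mathbb Q u^\ast=M$, and uniqueness was already shown; the precise description of the admissible $M$ under the sign normalisation of $\mathbb Q$ fixed in Section~\ref{basic} is the content of \cite{Fei79,Fei87} and may be quoted directly if the statement is wanted verbatim.
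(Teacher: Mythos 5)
The paper does not prove Proposition~\ref{unique} at all: it is quoted from \cite{Fei79,Fei87} as a known result of chemical reaction network theory, so there is no in-paper argument to compare against. Your reconstruction is the standard Horn--Jackson--Feinberg proof and is essentially sound: the reduction to $\ker A_k$ via the Laplacian factorisation $f=YA_k\Psi$, the deduction of weak reversibility from the existence of a positive kernel vector, the Matrix--Tree description of $\ker A_k$ linkage class by linkage class, the resulting characterisation $\{\text{positive CB equilibria}\}=\{\widetilde u\,e^{w}:w\in S^\perp\}$, the uniqueness via $\langle a-b,\ln a-\ln b\rangle=0$, and the existence via minimisation of the relative entropy $G$ on the compatibility class are all the right ingredients and are correctly assembled. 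Two small points deserve care. First, the existence step needs the compatibility class $\Sigma_M$ to contain a strictly positive point (otherwise the boundary-exclusion argument via $\partial_{u_i}G\to-\infty$ has no admissible direction to move in); this is exactly the ``positive stoichiometric compatibility class'' hypothesis in \cite{Fei79,Fei87}, and the paper's normalisation ``$0\neq M\in\mathbb R^m_+$'' is only a loose proxy for it --- you rightly flag this, and it is the one place where your statement and the paper's are not literally identical. Second, in the weak-reversibility step the precise fact you need is that every nonnegative kernel vector of a column-sum-zero Metzler-off-diagonal matrix is supported on the union of the terminal strong components, so a strictly positive kernel vector forces every complex to lie in a terminal strong component; your phrasing (``no proper terminal strong component'') should be read in that sense. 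Neither point is a gap in substance.
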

\begin{remark}\label{boundary-equilibrium}
	The strictly positive complex balanced equilibrium is uniquely determined through the initial mass vector $M$ rather the initial data. That means it's possible to have the same complex balanced equilibrium for different initial data as long as they have the same initial mass.

	It is remarked that though \eqref{system} possesses a unique strictly positive complex balanced equilibrium, it may have possibly many so-called {\normalfont boundary equilibrium} $u_{*}$, that means $u_*\in \partial\mathbb R_+^N$ and $u_{*}$ satisfies the condition \eqref{CB-condition}. For complex balanced systems without boundary equilibria it was proved that any solution converges exponentially to the strictly positive complex balanced equilibrium, see \cite{DFT16,FT17}. If a complex balanced system possesses boundary equilibria, then the large time behaviour of solutions is in general unclear. The {\normalfont Global Attractor Conjecture} asserts that despite of boundary equilibria, the strictly positive one is still the only attracting point. This conjecture however still remains unsolved in full generality.
\end{remark}
The class of complex balanced systems is an important class in chemical reaction network theory and thus has been studied extensively in the last decades. We emphasise that most of the existing works dealt with complex balanced systems in ODE settings, while the PDE settings are much less investigated. We refer the interested reader to recent works \cite{DFT16,FT17} for studies of complex balanced systems in PDE settings.

\section{Spectral gap for the linearised operator}\label{sec:spectral-gap}
This section shows that if \eqref{system} is complex balanced then its linearised system around any strictly positive complex balanced equilibrium converges exponentially to that equilibrium. Denote by $u_{\infty}$ a strictly complex balanced equilibrium to \eqref{system}, i.e. $u_{\infty}\in (0,\infty)^N$ and $u_{\infty}$ satisfies \eqref{CB-condition}. We first write down the linearised system of \eqref{system} around $u_{\infty}$. Since $f_i(u_{\infty}) = 0$ for all $i=1,\ldots, N$ we have
\begin{equation*}
	f_i(u) = \nabla f_i(u_{\infty})\cdot (u - u_{\infty}) + \text{higher oder terms}.
\end{equation*}
Denoting $v_i = u_i - u_{i,\infty}$ and $v= (v_1, \ldots, v_N)$ we obtain the linearised system around the equilibrium $u_{\infty}$ as follow
\begin{equation}\label{linear-system}
	\begin{aligned}
		\partial_tv_i - d_i\Delta v_i &= \nabla f_i(u_{\infty})\cdot v =: L_iv, && x\in\Omega, \quad t>0,\\
		\nabla v_i \cdot \nu &= 0, && x\in\partial\Omega, \quad t>0,\\
		v_i(x,0) &= u_{i,0}(x) - u_{i,\infty}, && x\in\Omega,
	\end{aligned}
\end{equation}
with
\begin{equation}\label{linearised-operator}
	L_iv = \nabla f_i(u_{\infty})\cdot v =   \sum_{r=1}^{R}k_ru_{\infty}^{y_r}\sum_{j=1}^{N}(y_{r,i}' - y_{r,i})y_{r,j}\frac{v_j}{u_{j,\infty}}.
\end{equation}

\begin{lemma}\label{3.1}
	Assume that $u_{\infty}$ is a (strictly positive) complex balanced equilibrium. Then we have the following identity
	\begin{equation}\label{identity}
		\sum_{i=1}^{N}L_iv\frac{v_i}{u_{i,\infty}} = - \frac{1}{2}\sum_{r=1}^{R}k_ru_{\infty}^{y_r}\left(\sum_{i=1}^{N}(y_{r,i}' - y_{r,i})\frac{v_i}{u_{i,\infty}}\right)^2 \quad \text{ for all } v\in \mathbb R^N.
	\end{equation}
\end{lemma}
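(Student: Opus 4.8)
The plan is to verify \eqref{identity} by a direct algebraic computation that reduces it to a single scalar equality which is precisely the content of the complex balanced condition \eqref{CB-condition}. To lighten notation I would set $w_i = v_i/u_{i,\infty}$ and write $y\cdot w = \sum_{j=1}^{N}y_j w_j$ for $y\in\mathbb R^N$, so that \eqref{linearised-operator} reads
\begin{equation*}
	L_iv = \sum_{r=1}^{R}k_r u_{\infty}^{y_r}\,(y_{r,i}' - y_{r,i})\,(y_r\cdot w).
\end{equation*}

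First I would evaluate the left-hand side of \eqref{identity}: multiplying by $w_i$, summing over $i$, and interchanging the order of summation gives
\begin{equation*}
	\sum_{i=1}^{N}L_iv\,\frac{v_i}{u_{i,\infty}} = \sum_{r=1}^{R}k_r u_{\infty}^{y_r}\,(y_r\cdot w)\sum_{i=1}^{N}(y_{r,i}' - y_{r,i})w_i = \sum_{r=1}^{R}k_r u_{\infty}^{y_r}\,(y_r\cdot w)\,\big((y_r' - y_r)\cdot w\big).
\end{equation*}
Since the right-hand side of \eqref{identity} equals $-\tfrac{1}{2}\sum_{r=1}^{R}k_r u_{\infty}^{y_r}\big((y_r' - y_r)\cdot w\big)^2$, writing $p_r = y_r\cdot w$ and $q_r = y_r'\cdot w$ the claimed identity is equivalent to $\sum_{r}k_r u_{\infty}^{y_r}\,p_r(q_r - p_r) = -\tfrac{1}{2}\sum_{r}k_r u_{\infty}^{y_r}(q_r - p_r)^2$. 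Bringing everything to one side and using the elementary factorisation $p_r(q_r - p_r) + \tfrac{1}{2}(q_r - p_r)^2 = \tfrac{1}{2}(q_r - p_r)(q_r + p_r)$, this collapses to
\begin{equation*}
	\sum_{r=1}^{R}k_r u_{\infty}^{y_r}\,(y_r'\cdot w)^2 = \sum_{r=1}^{R}k_r u_{\infty}^{y_r}\,(y_r\cdot w)^2.
\end{equation*}

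It remains to prove this last equality, and this is where the complex balanced hypothesis enters. I would regroup both sums according to the complex: the weight $(y_r\cdot w)^2$ depends on $r$ only through the reactant complex $y_r$, and $(y_r'\cdot w)^2$ only through the product complex $y_r'$, so
\begin{equation*}
	\sum_{r=1}^{R}k_r u_{\infty}^{y_r}\,(y_r\cdot w)^2 = \sum_{y\in\mathcal C}(y\cdot w)^2\!\!\sum_{\{r:\,y_r = y\}}\!\!k_r u_{\infty}^{y_r}, \qquad \sum_{r=1}^{R}k_r u_{\infty}^{y_r}\,(y_r'\cdot w)^2 = \sum_{y\in\mathcal C}(y\cdot w)^2\!\!\sum_{\{r:\,y_r' = y\}}\!\!k_r u_{\infty}^{y_r},
\end{equation*}
using that each reaction has exactly one reactant and one product complex. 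By \eqref{CB-condition} the two inner sums coincide for every complex $y\in\mathcal C$, hence the two grouped sums agree term by term, which completes the proof.

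The computation is elementary throughout, and I do not foresee a genuine obstacle; the only point requiring care is the bookkeeping of which index set each factor depends on, so that the regrouping over complexes is legitimate — once that is in place, \eqref{CB-condition} does all the work. In effect the lemma reformulates complex balance as the assertion that the linearised operator is dissipative in the weighted inner product $(v,\tilde v)\mapsto\sum_{i=1}^{N}v_i\tilde v_i/u_{i,\infty}$, with dissipation rate given by the quadratic form $w\mapsto\tfrac{1}{2}\sum_{r=1}^{R}k_r u_{\infty}^{y_r}\big((y_r' - y_r)\cdot w\big)^2$.
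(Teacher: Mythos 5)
Your proof is correct, and it reaches the same key mechanism as the paper — namely, that the complex balanced condition \eqref{CB-condition} can be multiplied by any weight depending only on the complex $y$ and summed over $y\in\mathcal C$ — but you package the reduction differently. The paper expands both sides of \eqref{identity} as quadratic forms in the variables $v_i/u_{i,\infty}$ and matches coefficients monomial by monomial: the diagonal terms reduce to $\sum_r k_ru_{\infty}^{y_r}y_{r,i}^2=\sum_r k_ru_{\infty}^{y_r}y_{r,i}'^2$ and the off-diagonal terms to $\sum_r k_ru_{\infty}^{y_r}y_{r,i}y_{r,j}=\sum_r k_ru_{\infty}^{y_r}y_{r,i}'y_{r,j}'$, each an instance of the weighted-sum argument with weights $y_i^2$ and $y_iy_j$ respectively. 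You instead use the scalar factorisation $p(q-p)+\tfrac12(q-p)^2=\tfrac12(q^2-p^2)$ with $p_r=y_r\cdot w$, $q_r=y_r'\cdot w$, collapsing the whole identity to the single equality $\sum_r k_ru_{\infty}^{y_r}(y_r\cdot w)^2=\sum_r k_ru_{\infty}^{y_r}(y_r'\cdot w)^2$, which is the same weighted-sum argument applied once with the weight $(y\cdot w)^2$. Since $(y\cdot w)^2=\sum_{i,j}y_iy_jw_iw_j$, your single application subsumes all of the paper's coefficient checks at once; this is cleaner and avoids the bookkeeping, while the paper's version makes the two structural identities on the moments of $y_r$ versus $y_r'$ explicit. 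Your closing remark correctly identifies the content of the lemma as dissipativity of the linearisation in the inner product weighted by $1/u_{i,\infty}$, which is exactly how it is used in Lemma \ref{spectral-gap}.
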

\begin{proof}
	We proof the identity by comparing the coefficients $\frac{v_iv_j}{u_{i,\infty}u_{j,\infty}}$ for all $i, j = 1,\ldots, N$. First of all the left hand side of \eqref{identity} is rewritten as
	\begin{equation}\label{t1}
		\begin{aligned}
		\sum_{i=1}^NL_iv\frac{v_i}{u_{i,\infty}} &= \sum_{r=1}^Rk_ru_{\infty}^{y_r}\sum_{i=1}^{N}\sum_{j=1}^{N}(y_{r,i}' - y_{r,i})y_{r,j}\frac{v_i}{u_{i,\infty}}\frac{v_j}{u_{j,\infty}}\\
		&= \sum_{r=1}^Rk_ru_{\infty}^{y_r}\sum_{i=1}^{N}\sum_{j=1}^{N}(y_{r,i}'y_{r,j} - y_{r,i}y_{r,j})\frac{v_i}{u_{i,\infty}}\frac{v_j}{u_{j,\infty}}.
		\end{aligned}
	\end{equation}
	Hence, for any $i=1,\ldots, N$, the coefficient of $v_i^2/u_{i,\infty}^2$ in the left hand side of \eqref{identity} is
	\begin{equation}\label{left-coe}
		\sum_{r=1}^{R}k_ru_{\infty}^{y_r}(y_{r,i}'y_{r,i} - y_{r,i}^2)
	\end{equation}
	and the coefficient in the right hand side is
	\begin{equation}\label{right-coe}
		\frac 12 \sum_{r=1}^{R}k_ru_{\infty}^{y_r}(2y_{r,i}y_{r,i}' - y_{r,i}^2 - y_{r,i}'^2) = \sum_{r=1}^{R}k_ru_{\infty}^{y_r}(y_{r,i}'y_{r,i} - \frac{1}{2}(y_{r,i}^2 + y_{r,i}'^2)).
	\end{equation}
	For \eqref{left-coe} and \eqref{right-coe} to be equal, we need to show
	\begin{equation}\label{t2}
		\sum_{r=1}^Rk_ru_{\infty}^{y_r}y_{r,i}^2 = \sum_{r=1}^{R}k_ru_{\infty}^{y_r}y_{r,i}'^2.
	\end{equation}
	By rearranging the sums, \eqref{t2} can be rewritten as
	\begin{equation*}
		\sum_{\{y\in \mathcal C\}} \sum_{\{r:\,y_r = y \}}k_ru_{\infty}^{y_r}y_{r,i}^2 = \sum_{\{y\in\mathcal{C}\}}\sum_{\{r:\, y_r' = y \}} k_ru_{\infty}^{y_r}y_{r,i}'^2,
	\end{equation*}
	or equivalently
	\begin{equation*}
		\sum_{\{y\in \mathcal C\}} y_i^2\sum_{\{r:\,y_r = y \}}k_ru_{\infty}^{y_r} = \sum_{\{y\in\mathcal{C}\}} y_i^2\sum_{\{r:\, y_r' = y \}} k_ru_{\infty}^{y_r}.
	\end{equation*}
	This equality follows directly from the complex balance condition \eqref{CB-condition} by multiplying both sides of \eqref{CB-condition} with $y_i^2$ then summing over $y\in \mathcal C$ . Hence, the coefficients of $v_i^2/u_{i,\infty}^2$ on both sides of \eqref{identity} are equal.

	Now for the coefficients of $v_iv_i/(u_{i,\infty}u_{j,\infty})$ we need to show, from \eqref{identity}, that
	\begin{equation*}
		\sum_{r=1}^{R}k_ru_{\infty}^{y_r}\left(y_{r,i}'y_{r,j} - y_{r,i}y_{r,j} + y_{r,j}'y_{r,i} - y_{r,j}y_{r,i}\right) = - \sum_{r=1}^R k_ru_{\infty}^{y_r}(y_{r,i}' - y_{r,i})(y_{r,j}' - y_{r,j})
	\end{equation*}
	or equivalently
	\begin{equation*}
		\sum_{r=1}^{R}k_ru_{\infty}^{y_r}y_{r,i}y_{r,j} = \sum_{r=1}^R k_ru_{\infty}^{y_r}y_{r,i}'y_{r,j}'.
	\end{equation*}
	This can be proved similarly to \eqref{t2} so we omit it here.
\end{proof}
\begin{remark}
	The proof of Lemma \ref{3.1} has to utilise the complex balanced condition \eqref{CB-condition} and hence is a nontrivial extension from the case of detailed balanced condition systems in \cite{MC16}.
\end{remark}

The next crucial lemma is the main part of this section. It shows that the linearised operator has a spectral gap. This results mostly from the identity \eqref{identity} in Lemma \ref{3.1} and the conservation laws \eqref{conservation} of \eqref{system}. Note that we have  $\mathbb Q\,\overline u(t) = M = \mathbb Q\, u_{\infty}$, thus $\mathbb Q\, \overline v(t) = 0$ for all $t>0$. 
\begin{lemma}\label{spectral-gap}
	There exists a constant $\lambda>0$ such that, for any $v\in H^1(\Omega)^N$ satisfying $\mathbb Q\,\overline v = 0$ there holds
	\begin{equation*}
		-\sum_{i=1}^{N}d_i\left\langle \Delta v_i, \frac{v_i}{u_{i,\infty}}\right\rangle - \sum_{i=1}^{N}\left\langle L_iv, \frac{v_i}{u_{i,\infty}}\right\rangle \geq \lambda \sum_{i=1}^{N}\frac{\|v_i\|_2^2}{u_{i,\infty}}
	\end{equation*}
	recalling that $\langle \cdot, \cdot \rangle$ is the inner product in $L^2(\Omega)$.
\end{lemma}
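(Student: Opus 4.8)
The plan is to split the quadratic form into two contributions — the diffusive (gradient) part and the reactive (zeroth-order) part — and control each separately, then combine them using the conservation constraint $\mathbb{Q}\,\overline v=0$. First I would integrate by parts the diffusion terms: since $\nabla v_i\cdot\nu=0$ on $\partial\Omega$ (well, strictly one only needs $v\in H^1$, so this is just the definition of the weak Laplacian paired against $v_i/u_{i,\infty}$), one gets
\begin{equation*}
	-\sum_{i=1}^{N}d_i\left\langle \Delta v_i,\frac{v_i}{u_{i,\infty}}\right\rangle = \sum_{i=1}^{N}\frac{d_i}{u_{i,\infty}}\int_{\Omega}|\nabla v_i|^2\,dx \;\geq\; \frac{d_{\min}}{u_{\max,\infty}}\sum_{i=1}^{N}\int_{\Omega}|\nabla v_i|^2\,dx,
\end{equation*}
where $d_{\min}=\min_i d_i>0$. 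For the reactive part, Lemma \ref{3.1} gives the pointwise identity $\sum_i L_iv\,\frac{v_i}{u_{i,\infty}} = -\frac12\sum_r k_r u_{\infty}^{y_r}\big(\sum_i (y_{r,i}'-y_{r,i})\frac{v_i}{u_{i,\infty}}\big)^2$, so $-\sum_i\langle L_iv,\frac{v_i}{u_{i,\infty}}\rangle = \frac12\sum_r k_r u_{\infty}^{y_r}\int_{\Omega}\big(\sum_i (y_{r,i}'-y_{r,i})\frac{v_i}{u_{i,\infty}}\big)^2\,dx \geq 0$. So both terms are nonnegative; the task is to show their sum dominates $\sum_i\|v_i\|_2^2$.

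Next I would reduce to a finite-dimensional / compactness argument. Write $v_i = \overline v_i + (v_i-\overline v_i)$, splitting each component into its mean and its mean-zero part. By the Poincaré–Wirtinger inequality, $\sum_i\int_\Omega|\nabla v_i|^2 \geq C_P\sum_i\|v_i-\overline v_i\|_2^2$, so the gradient term alone controls the mean-zero part of $v$. It therefore remains to control the constant part $\overline v=(\overline v_1,\dots,\overline v_N)\in\mathbb{R}^N$, which is constrained by $\mathbb{Q}\,\overline v=0$, i.e. $\overline v\in\ker\mathbb{Q} = \mathrm{range}(W)$. For constant vectors the gradient term vanishes and the reactive term becomes $\frac12\sum_r k_r u_\infty^{y_r}|\Omega|\,\big(\sum_i(y_{r,i}'-y_{r,i})\frac{\overline v_i}{u_{i,\infty}}\big)^2$. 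The key algebraic claim is that this quadratic form in $\overline v$ is \emph{positive definite on} $\ker\mathbb{Q}$: equivalently, if $\xi\in\mathrm{range}(W)$ satisfies $\sum_i(y_{r,i}'-y_{r,i})\frac{\xi_i}{u_{i,\infty}}=0$ for every $r$, then $\xi=0$. This holds because that condition says the vector $(\xi_i/u_{i,\infty})_i$ is orthogonal to every column $y_r'-y_r$ of $W$, i.e. $(\xi_i/u_{i,\infty})_i\in\ker(W^\top)=(\mathrm{range}\,W)^\perp$; writing $\xi = W\eta\in\mathrm{range}(W)$, a small computation with the diagonal positive matrix $D=\mathrm{diag}(u_{i,\infty})$ shows $\eta^\top W^\top D^{-1} W\eta = 0$, and since $D^{-1}$ is positive definite this forces $W\eta=0$, i.e. $\xi=0$. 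Hence the reactive form restricted to $\ker\mathbb{Q}$ has a positive smallest eigenvalue $\lambda_0>0$, giving $\frac12\sum_r k_r u_\infty^{y_r}|\Omega|(\cdots)^2 \geq \lambda_0|\Omega|\,|\overline v|^2 \geq \lambda_0\sum_i\|\overline v_i\|_2^2/u_{i,\infty}$ (absorbing the harmless constant $\min_i u_{i,\infty}$).

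Finally I would assemble the two estimates. Using $\|v_i\|_2^2 \leq 2\|v_i-\overline v_i\|_2^2 + 2\|\overline v_i\|_2^2$, combining the Poincaré bound on the mean-zero part with the coercivity bound on the mean part (splitting a small fraction of the gradient term off for Poincaré and keeping the rest, or simply noting the reactive term handles the means and the gradient term the oscillations), one obtains the claimed inequality with $\lambda$ depending on $d_{\min}$, $C_P$, $\lambda_0$, $|\Omega|$ and on $\max_i u_{i,\infty}$, $\min_i u_{i,\infty}$. The main obstacle is the positive-definiteness claim on $\ker\mathbb{Q}$: one must be careful that the reactive quadratic form is a priori only positive \emph{semi}definite on all of $\mathbb{R}^N$ (its kernel can be nontrivial), and the whole point of imposing $\mathbb{Q}\,\overline v=0$ — i.e. restricting to $\mathrm{range}(W)$, which is precisely the orthogonal complement of the space the conservation laws live in — is to kill that kernel. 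Verifying this cleanly, via the factorization through $W^\top D^{-1} W$, is where the complex-balanced structure (through the positivity of $u_\infty$, hence of $D^{-1}$) enters decisively.
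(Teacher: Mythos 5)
Your argument is correct and follows essentially the same route as the paper: integration by parts plus the identity of Lemma \ref{3.1}, Poincar\'e for the mean-zero part, Jensen's inequality (equivalently, the $L^2$-orthogonality of the mean and oscillation, which kills the cross terms in the reactive form) to reduce the reactive term to the averages, and a finite-dimensional nondegeneracy argument on $\ker\mathbb Q$ that exploits the strict positivity of $u_\infty$. The only cosmetic difference is in that last step: you parametrize $\overline v = W\eta$ and show $W^{\top}\mathcal D^{-1}W\eta=0$ forces $W\eta=0$, whereas the paper parametrizes the putative degenerate direction as $\overline v = \mathcal D\,\mathbb Q^{\top}\xi$ and shows $\mathbb Q\,\mathcal D\,\mathbb Q^{\top}$ is nonsingular --- dual formulations of the same factorization through the positive diagonal matrix $\mathcal D=\mathrm{diag}(u_{i,\infty})$.
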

\begin{proof}
	By using Lemma \ref{3.1} and integration by parts we have 
	\begin{equation*}
		\begin{gathered}
		-\sum_{i=1}^{N}d_i\left\langle \Delta v_i, \frac{v_i}{u_{i,\infty}}\right\rangle - \sum_{i=1}^{N}\left\langle L_iv, \frac{v_i}{u_{i,\infty}}\right\rangle\\
		 = \sum_{i=1}^{N}d_i\frac{\|\nabla v_i\|_2^2}{u_{i,\infty}} + \frac 12\sum_{r=1}^{R}k_ru_{\infty}^{y_r}\int_{\Omega}\left(\sum_{i=1}^{N}(y_{r,i}' - y_{r,i})\frac{v_i}{u_{i,\infty}}\right)^2dx.
		 \end{gathered}
	\end{equation*}
	Firstly, by the Poincar\'e inequality $\|\nabla f\|_2^2 \geq P(\Omega)\|f - \overline{f}\|_2^2$, recalling that $\overline{f} = \frac{1}{|\Omega|}\int_{\Omega}f dx$ we have
	\begin{equation}\label{e1}
		\sum_{i=1}^{N}d_i\frac{\|\nabla v_i\|_2^2}{u_{i,\infty}} \geq P(\Omega)\min_{i=1,\ldots, N}\{d_i\}\sum_{i=1}^{N}\frac{\|v_i - \overline{v}_i\|_2^2}{u_{i,\infty}}.
	\end{equation}
	On the other hand, by Jensen's inequality we get,
	\begin{equation*}
		\frac 12\sum_{r=1}^{R}k_ru_{\infty}^{y_r}\int_{\Omega}\left(\sum_{i=1}^{N}(y_{r,i}' - y_{r,i})\frac{v_i}{u_{i,\infty}}\right)^2dx \geq \frac{1}{2|\Omega|}\sum_{r=1}^{R}k_ru_{\infty}^{y_r}\left(\sum_{i=1}^{N}(y_{r,i}' - y_{r,i})\frac{\overline v_i}{u_{i,\infty}}\right)^2.
	\end{equation*}
	We will now prove that there exists $\beta >0$ such that for all $\overline{v}\in \mathbb R^N$ with $\mathbb Q\,\overline{v} = 0$ there holds
	\begin{equation}\label{e2}
		\frac{1}{2|\Omega|}\sum_{r=1}^{R}k_ru_{\infty}^{y_r}\left(\sum_{i=1}^{N}(y_{r,i}' - y_{r,i})\frac{\overline v_i}{u_{i,\infty}}\right)^2 \geq \beta \sum_{i=1}^{N}\frac{\overline v_i^2}{u_{i,\infty}}.
	\end{equation}
	
We prove \eqref{e2} by a contradiction argument. Assume that 
	\begin{equation}\label{3.7.1}
		\inf_{\overline{v}\in \mathbb R^N,\; \mathbb Q\,\overline{v} = 0}\frac{\frac{1}{2|\Omega|}\sum_{r=1}^{R}k_ru_{\infty}^{y_r}\left(\sum_{i=1}^{N}(y_{r,i}' - y_{r,i})\frac{\overline v_i}{u_{i,\infty}}\right)^2}{\sum_{i=1}^{N}\frac{\overline v_i^2}{u_{i,\infty}}}  = 0.	
	\end{equation}
	
	Since both the nominator and denominator have the homogeneity of order two, we can assume w.l.o.g. that $\|\overline{v}\| = 1$ with $\|\cdot\|$ denotes the Euclidean norm in $\mathbb R^N$. Thanks to this assumption, the denominator is bounded from above. 
	If \eqref{3.7.1} holds then there exists $\overline{v} \in \mathbb R^N$ with $\mathbb Q\,\overline{v} = 0$ and $\|\overline{v}\| = 1$ such that
	\begin{equation*}
		\sum_{r=1}^{R}k_ru_{\infty}^{y_r}\left(\sum_{i=1}^N(y_{r,i}' - y_{r,i})\frac{\overline{v_i}}{u_{i,\infty}}\right)^2 = 0
	\end{equation*}
	which implies for all $r = 1, \ldots, R$,
	\begin{equation*}
		\sum_{i=1}^{N}(y_{r,i}' - y_{r,i})\frac{\overline{v_i}}{u_{i,\infty}} = 0.
	\end{equation*}
	Recalling the Wegscheider's matrix $W = (y_r' - y_r)_{r=1,\ldots, R}$, then this implies that
	\begin{equation*}
		\mathrm{diag}\left(\frac{1}{u_{1,\infty}}, \ldots, \frac{1}{u_{N,\infty}}\right)\overline{v} = \left(\frac{\overline{v_1}}{u_{1,\infty}}, \ldots, \frac{\overline{v_N}}{u_{N,\infty}}\right) \in \ker(W^{\top}).
	\end{equation*}
	If $\ker{W^{\top}} = \{0\}$ then it follows immediately $\overline{v} = 0$, contradicting with $\|\overline{v}\| = 1$. In case $\ker{W^{\top}} \not= \{0\}$, we recall that the rows of $\mathbb Q$ form a basis of $\ker(W^{\top})$ (see Section \ref{basic}), hence 
	\begin{equation*}
		\mathrm{diag}\left(\frac{1}{u_{1,\infty}}, \ldots, \frac{1}{u_{N,\infty}}\right)\overline{v} = \mathbb Q^{\top}\xi \quad \text{ for some } \xi \in \mathbb R^m
	\end{equation*}
	and thus $\overline{v} = \mathrm{diag}(u_{1,\infty}, \ldots, u_{N,\infty})\mathbb Q^{\top}\xi$. {By the assumption $\mathbb Q\,\overline{v} = 0$ we have
	\begin{equation*}
		\mathbb Q\,\mathrm{diag}(u_{1,\infty}, \ldots, u_{N,\infty})\mathbb Q^{\top}\xi = 0 \quad \text{ or } \quad \mathbb Q\,\mathcal D\,\mathbb Q^{\top}\xi = 0
	\end{equation*}
	where $\mathcal D = \mathrm{diag}(u_{1,\infty}, \ldots, u_{N,\infty})$. Now
	\begin{equation*}
		0 = (\xi, \mathbb Q\,\mathcal D\,\mathbb Q^{\top}\xi) = (\xi, \mathbb Q\,\mathcal D^{1/2}\,\mathcal{D}^{1/2}\,\mathbb Q^{\top}\xi) = (\mathcal D^{1/2}\mathbb Q^{\top}\xi, \mathcal D^{1/2}\mathbb Q^{\top}\xi),
	\end{equation*}
	in which $(\cdot,\cdot)$ denotes the inner product in $\mathbb R^m$, hence $\xi = 0$ since the matrix $\mathcal D^{1/2}\mathbb Q^{\top}$ has full rank.} But $\xi = 0$ leads to $\overline{v} = 0$ which contradicts with $\|\overline{v}\| = 1$. Therefore \eqref{e2} is proved. Finally, a combination of \eqref{e1} and \eqref{e2} gives us the desired estimate of this Lemma.
\end{proof}
\section{Proof of Theorem \ref{theo:main}}\label{sec:main}

In this section we denote again by $u_{\infty} = (u_{1,\infty}, \ldots, u_{N,\infty})$ a strictly positive complex balanced equilibrium to \eqref{system}. 

Since the system \eqref{system}--\eqref{nonlinearties} has diagonal linear diffusion matrix and the nonlinearities are polynomial, thus locally Lipschitz, the local existence of classical solution is well known from literature. By classical solution to \eqref{system} on $[0,T)$ we mean that $u\in C([0,T); L^2(\Omega)^N]\cap L^{\infty}(Q_{T-\tau})$ for all $\tau \in (0,T)$, and $\partial_tu_i$, $\partial_{x_j}u_i$, $\partial_{x_jx_k}u_i, f_i(u) \in L^p(Q_{\tau,T})$ for all $\tau\in (0,T)$, all $j,k = 1,\ldots, d$, all $p\in [1,\infty)$ and all $i = 1,\ldots, N$, and the system \eqref{system} is satisfied a.e..
\begin{theorem}[Local existence]\cite{Ama85,Pie-Survey,Rot87}\label{local}
	Let $\Omega\subset\mathbb R^d$ be a bounded domain with smooth boundary $\partial\Omega$ (e.g. $\partial\Omega$ is of class $C^{2+\epsilon}$ with $\epsilon>0$). Assume that the initial data $u_0 \in L^{\infty}(\Omega)^N$ is nonnegative. Then there exists a maximal time interval $(0,T_{max})$ such that \eqref{system} possesses a unique local nonnegative classical solution $u = (u_1,\ldots, u_N)$ to \eqref{system}--\eqref{nonlinearties} on $(0,T_{max})$. This solution satisfies in particular $u_i \in C([\tau, T_{max}); H^2(\Omega))$ for any $\tau\in (0,T_{max})$ and for all $i=1,\ldots, N$.

Moreover,
\begin{equation*}
	\lim_{t\to T_{max}-}\|u_i(t)\|_{L^{\infty}(\Omega)} < +\infty \quad \text{ for all } \quad i=1,\ldots, N \quad \Longrightarrow \quad T_{max} = +\infty.
\end{equation*}
\end{theorem}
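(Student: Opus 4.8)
The plan is to follow the standard semigroup approach to semilinear parabolic systems (as in \cite{Ama85,Rot87} and the survey \cite{Pie-Survey}), which applies here because the diffusion is diagonal with constant positive coefficients and the reaction vector field $f = (f_1,\ldots,f_N)$ from \eqref{nonlinearties} is polynomial, hence Lipschitz continuous on bounded subsets of $L^{\infty}(\Omega)^N$. First I would fix the phase space $X = L^{\infty}(\Omega)$ (equivalently $C(\overline\Omega)$) and recall that for each $i$ the operator $d_i\Delta$ with homogeneous Neumann boundary conditions generates an analytic semigroup $S_i(t) = e^{t d_i\Delta}$ on $X$ which is a contraction, $\|S_i(t)\|_{X\to X}\le 1$. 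A mild solution of \eqref{system} on $[0,T]$ is then a fixed point in $C([0,T];X^N)$ of the map
\begin{equation*}
	\Phi(u)_i(t) = S_i(t)u_{i,0} + \int_0^t S_i(t-s)\,f_i(u(s))\,ds, \qquad i = 1,\ldots, N.
\end{equation*}
Using the contractivity of $S_i$ together with the local Lipschitz bound $\|f_i(u) - f_i(w)\|_{\infty}\le C(\rho)\,\|u-w\|_{\infty}$ valid on $\{\|u\|_{X^N},\|w\|_{X^N}\le\rho\}$, one checks that $\Phi$ maps a suitable closed ball of $C([0,T];X^N)$ centred at the constant function $u_0$ into itself and is a strict contraction there, provided $T$ is chosen small depending only on $\rho = 2\|u_0\|_{X^N}$. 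Banach's fixed point theorem then yields a unique local mild solution, and the usual continuation argument a maximal one on $[0,T_{max})$.

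Next I would upgrade this mild solution to a classical one in the sense defined in the paper. Since $u\in C([0,T_{max});X^N)$, for any $0<\tau<T<T_{max}$ we have $f_i(u)\in L^{p}(Q_{\tau,T})$ for every $p\in[1,\infty)$; applying maximal $L^p$-regularity for the heat equation with Neumann boundary data (here the $C^{2+\epsilon}$ smoothness of $\partial\Omega$ enters) gives $\partial_t u_i,\ \partial_{x_j}u_i,\ \partial_{x_jx_k}u_i\in L^p(Q_{\tau',T})$ for $\tau<\tau'<T$ and all $p<\infty$, and the mixed Sobolev embedding then yields $u_i\in C([\tau,T_{max});H^2(\Omega))$, with \eqref{system} satisfied a.e. For nonnegativity I would use the quasi-positivity of the reactions: from \eqref{nonlinearties}, if $u\ge 0$ and $u_i = 0$ then every term with $y_{r,i}\ge 1$ vanishes because $u^{y_r}$ carries the factor $u_i^{y_{r,i}}$, while each remaining term has $y_{r,i} = 0$ and hence coefficient $y_{r,i}' - y_{r,i} = y_{r,i}'\ge 0$, so $f_i(u)\ge 0$; a standard truncation argument (solve with $f_i$ precomposed with the projection of $\mathbb R^N$ onto $[0,\infty)^N$ and check invariance of $[0,\infty)^N$ componentwise) then shows the solution is componentwise nonnegative whenever $u_0\ge 0$.

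Finally, the blow-up alternative follows because the local existence time in the fixed-point step depends on the initial data only through $\|u_0\|_{X^N}$: if $T_{max}<\infty$ while $\lim_{t\to T_{max}-}\|u_i(t)\|_{L^{\infty}(\Omega)}<+\infty$ for each $i$, then $\sup_{[0,T_{max})}\|u(t)\|_{X^N}<\infty$, so restarting the construction from a time close to $T_{max}$ produces a solution on an interval of fixed positive length extending beyond $T_{max}$, contradicting maximality. I do not expect a genuine obstacle, as the whole argument is classical; the one technical point requiring care is the parabolic bootstrap of the second step near $\partial\Omega$ up to $H^2$ in the spatial variable, which is precisely why the boundary regularity hypothesis is imposed.
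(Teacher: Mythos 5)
Your argument is correct and is essentially the standard proof that the paper invokes by citation (it states Theorem \ref{local} with references to \cite{Ama85,Pie-Survey,Rot87} and gives no proof of its own): a Banach fixed point for the mild formulation using the local Lipschitz character of the polynomial nonlinearities, a parabolic bootstrap to classical regularity, quasi-positivity for nonnegativity, and the uniformity of the local existence time in $\|u_0\|_{L^\infty}$ for the blow-up alternative. No gap; the only minor point to watch is that for $u_0\in L^\infty(\Omega)$ the Neumann heat semigroup is not strongly continuous at $t=0$ on $L^\infty$, which is why the paper's notion of classical solution only asks for continuity at $t=0$ in $L^2$.
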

\begin{remark}
	The non-negativity of local solution in Theorem \ref{local} follows from the fact that the nonlinearities $f_i(u)$ defined in \eqref{nonlinearties} satisfies a so-called {\it quasi-positivity} property, that is
	\begin{equation*}
		f_i(u) \geq 0 \text{ for all } u\in \mathbb R_+^N \text{ and } u_i = 0.
	\end{equation*}
	This property has a simple physical interpretation: if a chemical substance $S_i$ has zero concentration then it cannot be consumed in the corresponding reaction \eqref{reactions}.
\end{remark}
Since $u\in C([0,T_{max});L^2(\Omega)^N)$, the following corollary follows immediately.
\begin{corollary}\label{shift_initial}
	There exists $0<t_0<T_{max}$ such that 
	\begin{equation*}
		\sum_{i=1}^{N}\|u_i(t_0) - u_{i,\infty}\|_2 \leq 2\sum_{i=1}^{N}\|u_{i,0} - u_{i,\infty}\|_2.
	\end{equation*}
\end{corollary}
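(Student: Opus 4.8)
The statement is an immediate consequence of the continuity in time of the $L^2$-distance of $u(t)$ to $u_{\infty}$, which is built into the notion of classical solution. The plan is as follows. Define the scalar function
\begin{equation*}
	\phi(t) := \sum_{i=1}^{N}\|u_i(t) - u_{i,\infty}\|_2, \qquad t\in[0,T_{max}).
\end{equation*}
Since $\Omega$ is bounded, each constant function $u_{i,\infty}$ belongs to $L^2(\Omega)$, and since $u\in C([0,T_{max});L^2(\Omega)^N)$ (by Theorem \ref{local}, more precisely by the definition of classical solution recalled above), the map $t\mapsto u_i(t)-u_{i,\infty}$ is continuous from $[0,T_{max})$ into $L^2(\Omega)$. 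Composing with the (continuous) norm and summing over the finitely many indices $i=1,\ldots,N$, one concludes that $\phi$ is continuous on $[0,T_{max})$, and in particular continuous at $t=0$, where $\phi(0)=\sum_{i=1}^{N}\|u_{i,0}-u_{i,\infty}\|_2$.

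Next I would distinguish two cases. If $\phi(0)>0$, then by continuity at $t=0$ there is some $\delta\in(0,T_{max})$ such that $|\phi(t)-\phi(0)|\le\phi(0)$ for all $t\in[0,\delta)$; picking any $t_0\in(0,\delta)$ then yields $\phi(t_0)\le 2\phi(0)$, which is exactly the claimed inequality. If instead $\phi(0)=0$, then $u_{i,0}=u_{i,\infty}$ a.e.\ for every $i$, so $u\equiv u_{\infty}$ is the (unique, by Theorem \ref{local}) local classical solution; hence $\phi(t_0)=0=2\phi(0)$ for any choice of $t_0\in(0,T_{max})$. In either case the desired $t_0$ exists.

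There is no serious obstacle here: the only points requiring a word of care are the degenerate case $\phi(0)=0$, disposed of by uniqueness of the local solution, and the elementary observation that the constants $u_{i,\infty}$ lie in $L^2(\Omega)$ because $\Omega$ has finite measure. The role of the corollary is simply to license restarting the evolution at a positive time $t_0$ — where the solution already enjoys the $H^2(\Omega)$-regularity furnished by Theorem \ref{local} — at the cost of only a harmless factor $2$ in the $L^2$-distance to equilibrium, which will be absorbed into the smallness parameter $\varepsilon$ in the proof of Theorem \ref{theo:main}.
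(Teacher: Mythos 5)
Your proof is correct and follows the same route as the paper, which simply observes that the corollary is immediate from the continuity of $t\mapsto u(t)$ in $L^2(\Omega)^N$ at $t=0$. Your additional treatment of the degenerate case $\phi(0)=0$ (via uniqueness and the fact that $f(u_{\infty})=0$ makes the constant $u_{\infty}$ a solution) is a harmless refinement the paper leaves implicit.
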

Thanks to this corollary we can shift the initial time to $t_0>0$ to make use of the fact that $u_i \in C([t_0,T_{max}); H^2(\Omega))$ and still keep the closeness to equilibrium $u_{\infty}$ when the initial data is close to $u_{\infty}$.

\medskip
System \eqref{system} is rewritten as
\begin{equation*} 
	\begin{aligned}
		\partial_t u_i - d_i\Delta u_i &= \nabla f_i(u_{\infty})\cdot (u - u_{\infty}) + [f_i(u) - \nabla f_i(u_{\infty})\cdot (u - u_{\infty})], && x\in\Omega,\\
		\nabla u_i \cdot \nu &= 0, && x\in\partial\Omega,\\
		u_i(x,0) &= u_{i,0}(x), && x\in\Omega.
	\end{aligned}
\end{equation*}
Denote by $v_i = u_i - u_{i,\infty}$, the system for $v = (v_1, \ldots, v_N)$ reads as
\begin{equation}\label{Taylor}
	\begin{aligned}
		\partial_t v_i - d_i\Delta v_i &= L_i v + f_i(v+u_{\infty}) - \nabla f_i(u_{\infty})\cdot v =: L_i v + g_i(v), && x\in\Omega,\\
		\nabla v_i \cdot \nu &= 0, && x\in\partial\Omega,\\
		v_i(x,0) &= u_{i,0}(x) - u_{i,\infty}, && x\in\Omega
	\end{aligned}
\end{equation}
recalling that $L_iv$ is defined in \eqref{linearised-operator}. 
\begin{lemma}\label{g}
	There exists a constant $\delta \in (0,1]$ such that for all $i=1,\ldots, N$,
	\begin{equation}\label{e2_1}
		|g_i(v)| \leq C\sum_{j=1}^N(|v_j|^{\mu} + |v_j|^{1+\delta}) \quad \text{ for all } v\in \mathbb R^N_+,
	\end{equation}
	for a constant $C>0$.
\end{lemma}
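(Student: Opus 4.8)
The plan is to split the estimate according to the size of $|v|$: for $|v|$ small I would use Taylor's theorem at the strictly positive equilibrium $u_\infty$, and for $|v|$ large the crude polynomial bound \eqref{growth}. First I would write $g_i$ explicitly. Since $u_\infty$ satisfies the complex balanced condition \eqref{CB-condition} we have $f_i(u_\infty)=0$, so by \eqref{nonlinearties} together with the formula \eqref{linearised-operator} for $L_i$,
\[
g_i(v)=\sum_{r=1}^{R}k_r(y_{r,i}'-y_{r,i})\,\phi_r(v),\qquad
\phi_r(v):=(v+u_\infty)^{y_r}-u_\infty^{y_r}-\sum_{j=1}^{N}y_{r,j}\frac{u_\infty^{y_r}}{u_{j,\infty}}v_j ,
\]
i.e. $\phi_r$ is exactly the first-order Taylor remainder at $u_\infty$ of the generalized monomial $m_r(w):=w^{y_r}=\prod_{j}w_j^{y_{r,j}}$.

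For $v\in\mathbb R_+^N$ with $|v|\le 1$, the segment $\{u_\infty+tv:\,t\in[0,1]\}$ stays inside the fixed compact box $K:=\prod_{j=1}^{N}[u_{j,\infty},u_{j,\infty}+1]\subset(0,\infty)^N$, which is bounded away from the coordinate hyperplanes precisely because $u_\infty$ is strictly positive. On $(0,\infty)^N$ each $m_r$ is $C^2$ (here the non-integer exponents $y_{r,j}\in[1,\infty)$ are harmless since we never approach $\{w_j=0\}$), so $C_r:=\sup_K\|D^2 m_r\|<\infty$ and Taylor's theorem with integral remainder gives $|\phi_r(v)|\le \tfrac12 C_r|v|^2$; summing over $r$ yields $|g_i(v)|\le C|v|^2$. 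For $v\in\mathbb R_+^N$ with $|v|\ge 1$ I would instead bound $|g_i(v)|\le |f_i(v+u_\infty)|+|\nabla f_i(u_\infty)\cdot v|$; the growth estimate \eqref{growth} gives $|f_i(v+u_\infty)|\le K(|v+u_\infty|^\mu+1)\le C(|v|^\mu+1)$, while the linear term is $\le C|v|$, and since $\mu>1$ and $|v|\ge 1$ both $|v|$ and $1$ are dominated by $|v|^\mu$, so $|g_i(v)|\le C|v|^\mu$. Combining the two regimes, $|g_i(v)|\le C(|v|^2+|v|^\mu)$ for all $v\in\mathbb R_+^N$, which is the assertion with $\delta=1\in(0,1]$; passing from $|v|^p=(\sum_j|v_j|)^p$ to $\sum_j|v_j|^p$ (for $p=2$ and $p=\mu$) costs only a dimensional constant by convexity, giving the stated form $|g_i(v)|\le C\sum_j(|v_j|^\mu+|v_j|^{1+\delta})$.

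The only genuinely delicate point — and the one I would be careful about — is that the constant in the quadratic bound on the small regime must be \emph{uniform} over all small $v\ge 0$, not depending on any a priori bound on the solution; this is exactly what the compactness of $K$ and the strict positivity of $u_\infty$ secure, by keeping the relevant segment in the region where the fractional-power monomials $m_r$ are twice continuously differentiable. Everything else is routine bookkeeping. (If one wanted the lemma for a general superlinear $\mu\in(1,2)$ as well, the same argument applies verbatim, since the large-$|v|$ term $|v|^\mu$ appears on the right-hand side anyway; in the regime $\mu=1+4/d\ge 2$ relevant here, $\delta=1$ is in fact the natural choice.)
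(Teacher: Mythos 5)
Your proof is correct, and it takes a genuinely different (and in one respect sharper) route than the paper. The paper expands each scalar factor $(v_j+u_{j,\infty})^{y_{r,j}}$ separately, recording a remainder $O(|v_j|^{y_{r,j}})$ whenever $y_{r,j}>1$, and then multiplies out; this is why its $\delta$ is defined by $1+\delta=\min\{2;\min\{y_{r,j}:y_{r,j}>1\}\}$ and can be strictly less than $1$ when some stoichiometric coefficient lies in $(1,2)$. You instead split into $|v|\le 1$ and $|v|\ge 1$, use the multivariate $C^2$ Taylor remainder on the compact box around the strictly positive $u_\infty$ in the first regime, and the crude growth bound \eqref{growth} in the second, landing on $|g_i(v)|\le C(|v|^2+|v|^\mu)$, i.e.\ $\delta=1$ always. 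Your key observation — that the fractional-power monomials are $C^2$ on a neighbourhood of $u_\infty$ because the segment $u_\infty+tv$, $v\in\mathbb R^N_+$, never approaches the coordinate hyperplanes — is exactly right and is what lets you avoid tracking the exponents $y_{r,j}\in(1,2)$ at all; your bound is in fact slightly stronger than the paper's, since $|v_j|^2\le|v_j|^{1+\delta}$ for $|v_j|\le1$ while the large-$|v_j|$ regime is absorbed by $|v_j|^\mu$ (using $\mu\ge2$ here). The choice $\delta=1$ is also compatible with every downstream use of the lemma (Lemma \ref{decay-L2} needs $2+2\delta\le 4$ and the proof of Theorem \ref{theo:main} needs $p+\delta\le\mu+p-1$, both satisfied since $\mu\ge 2$). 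The only cosmetic point: the statement is for $v\in\mathbb R^N_+$, and your argument indeed uses $v_j\ge0$ to keep the segment inside $K$, so no adjustment is needed.
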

\begin{proof}
	Define $\delta \in (0,1]$ as
	\begin{equation*}
		1+\delta = \min\left\{2; \min_{r=1,\ldots, R, j=1,\ldots, N}\{y_{r,j}:  y_{r,j}>1\} \right\}.
	\end{equation*}
	We write
	\begin{equation}\label{g1}
		\begin{aligned}
			|g_i(v)| &= |f_i(u) - \nabla f_i(u_{\infty})\cdot v|\\
			&= \left|\sum_{r=1}^Rk_r(y_{r,i}' - y_{r,i})\left((v+u_{\infty})^{y_r} - \sum_{j=1}^Ny_{r,j}\frac{u_{\infty}^{y_r}}{u_{j,\infty}}v_j \right)\right|\\
			&= \left|\sum_{r=1}^Rk_r(y_{r,i}' - y_{r,i})\left((v+u_{\infty})^{y_r} - u_{\infty}^{y_r} -  \sum_{j=1}^Ny_{r,j}\frac{u_{\infty}^{y_r}}{u_{j,\infty}}v_j \right)\right|
		\end{aligned}
	\end{equation}
	where we have used $f_i(u_{\infty}) = \sum_{r=1}^{R}k_r(y_{r,i}' - y_{r,i})u_{\infty}^{y_r} = 0$ in the last step. By Taylor's expansion we have
	\begin{equation*}
		(v_j + u_{j,\infty})^{y_{r,j}} = u_{j\infty}^{y_{r,j}} + y_{r,j}u_{j,\infty}^{y_{r,j}-1}v_j + \mathbf{1}_{y_{r,j} > 1}O(|v_j|^{y_{r,j}})
	\end{equation*}
	where $\mathbf{1}_{y_{r,j}>1} = 1$ if $y_{r,j}>1$ and $\mathbf{1}_{y_{r,j}>1} = 0$ otherwise.
	Hence
	\begin{equation*}
		\begin{aligned}
			&(v+u_{\infty})^{y_r} - u_{\infty}^{y_r} -  \sum_{j=1}^Ny_{r,j}\frac{u_{\infty}^{y_r}}{u_{j,\infty}}v_j\\
			&=\prod_{j=1}^N\left[u_{j,\infty}^{y_{r,j}} + y_{r,j}u_{j,\infty}^{y_{r,j}-1}v_j + \mathbf{1}_{y_{r,j} > 1} O(|v_j|^{y_{r,j}})  \right]- u_{\infty}^{y_r} -  \sum_{j=1}^Ny_{r,j}\frac{u_{\infty}^{y_r}}{u_{j,\infty}}v_j\\
			&\leq C\sum_{j=1}^{N}|v_j|^{\mu} + C\sum_{j=1\ldots N:\;y_{r,j}>1}|v_j|^{y_{r,j}}\\
			&\leq C\sum_{j=1}^{N}(|v_j|^{\mu} + |v_j|^{1+\delta}).
		\end{aligned}
	\end{equation*}
	Inserting this into \eqref{g1} we obtain the desired estimate \eqref{e2_1}.
\end{proof}

The local existence of classical solution to \eqref{Taylor} follows from Theorem \ref{local}. Moreover, thanks to Corollary \ref{shift_initial}, there exists $t_0 > 0$ such that
\begin{equation}\label{small_v}
	\sum_{i=1}^{N}\|v_i(t_0)\|_2 \leq 2\sum_{i=1}^{N}\|v_{i,0}\|_2.
\end{equation}
\begin{lemma}\label{decay-L2}
	There exists $\varepsilon>0$ small enough such that if $\sum_{i=1}^{N}\|v_{i,0}\|_2 \leq \varepsilon$ then the solution $v = (v_1, \ldots, v_N)$ to \eqref{Taylor} satisfies
	\begin{equation*}
		\sum_{i=1}^{N}\frac{\|v_i(t)\|_2^2}{u_{i,\infty}} \leq e^{-\beta (t - t_0)}\sum_{i=1}^{N}\frac{\|v_{i}(t_0)\|_2^2}{u_{i,\infty}} \quad \text{ for all } \quad t_0 \leq t < T_{max},
	\end{equation*} 
	where $\beta>0$ is a constant independent of $t$.
\end{lemma}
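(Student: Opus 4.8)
The plan is to run a weighted $L^2$-energy estimate for the perturbation system \eqref{Taylor}, converting the spectral gap of Lemma \ref{spectral-gap} into dissipation, absorbing the superlinear remainder $g_i(v)$ by means of the Gagliardo--Nirenberg inequality, and then closing the argument with a continuity (bootstrap) argument that makes the \emph{a priori} smallness of $\|v(t)\|_2$ available.

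First I would test the $i$-th equation of \eqref{Taylor} with $v_i/u_{i,\infty}$, integrate over $\Omega$, and sum over $i=1,\dots,N$. Integration by parts (using the Neumann condition) together with Lemma \ref{3.1} gives
\begin{equation*}
\begin{aligned}
	&\frac12\frac{d}{dt}\sum_{i=1}^N\frac{\|v_i\|_2^2}{u_{i,\infty}} + \sum_{i=1}^N d_i\frac{\|\nabla v_i\|_2^2}{u_{i,\infty}} + \frac12\sum_{r=1}^R k_r u_\infty^{y_r}\int_\Omega\Bigl(\sum_{i=1}^N(y_{r,i}'-y_{r,i})\frac{v_i}{u_{i,\infty}}\Bigr)^2 dx \\
	&\qquad = \sum_{i=1}^N\Bigl\langle g_i(v),\frac{v_i}{u_{i,\infty}}\Bigr\rangle.
\end{aligned}
\end{equation*}
Since the conservation law \eqref{conservation} and the hypothesis $\mathbb Q\,\overline u_0 = \mathbb Q\,u_\infty$ give $\mathbb Q\,\overline v(t)=0$, the left-hand side above (without its $\frac{d}{dt}$ term) is bounded below by $\lambda\sum_i\|v_i\|_2^2/u_{i,\infty}$ (Lemma \ref{spectral-gap}) and also, trivially, by $\sum_i d_i\|\nabla v_i\|_2^2/u_{i,\infty}$; averaging these two lower bounds yields
\begin{equation*}
\begin{aligned}
	&\frac12\frac{d}{dt}\sum_{i=1}^N\frac{\|v_i\|_2^2}{u_{i,\infty}} + \frac12\sum_{i=1}^N d_i\frac{\|\nabla v_i\|_2^2}{u_{i,\infty}} + \frac{\lambda}{2}\sum_{i=1}^N\frac{\|v_i\|_2^2}{u_{i,\infty}} \\
	&\qquad \le \sum_{i=1}^N\int_\Omega|g_i(v)|\,\frac{|v_i|}{u_{i,\infty}}\,dx \le C\bigl(\|v\|_{\mu+1}^{\mu+1} + \|v\|_{2+\delta}^{2+\delta}\bigr),
\end{aligned}
\end{equation*}
where the last step uses Lemma \ref{g} and $|v_i|\le|v|$.

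The decisive point is that both terms on the right are absorbable by the left-hand side once $\|v\|_2$ is small. Applying the Gagliardo--Nirenberg inequality $\|w\|_{L^p(\Omega)}\le C\|w\|_{H^1(\Omega)}^\theta\|w\|_{L^2(\Omega)}^{1-\theta}$ with $\theta = d\bigl(\tfrac12-\tfrac1p\bigr)$ — legitimate since $p=\mu+1=2+\tfrac4d$ and $p=2+\delta$ are both subcritical for $d\le4$ — the choice $\mu = 1+\tfrac4d$ is exactly what makes $\theta(\mu+1)=d\bigl(\tfrac{\mu+1}{2}-1\bigr)=2$, so that
\begin{equation*}
	\|v\|_{\mu+1}^{\mu+1}\le C\|v\|_{H^1}^2\|v\|_2^{\mu-1}\le C\bigl(\|v\|_2^2+\|\nabla v\|_2^2\bigr)\|v\|_2^{\mu-1},
\end{equation*}
i.e. the $H^1$-norm appears to the power exactly two (matching the dissipation) while a strictly positive power $\mu-1=\tfrac4d$ of $\|v\|_2$ is left over. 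For the lower-order term one has $\theta_\delta(2+\delta)=\tfrac{d\delta}{2}\le\tfrac d2\le2$; if this exponent equals $2$ the same bound applies, and otherwise Young's inequality turns $\|v\|_{2+\delta}^{2+\delta}$ into $\epsilon\|v\|_{H^1}^2 + C_\epsilon\|v\|_2^q$ with $q=\tfrac{2(2+\delta-d\delta/2)}{2-d\delta/2}>2$. Hence, on any interval on which $\sum_i\|v_i(t)\|_2\le\eta$ for a sufficiently small $\eta$, the right-hand side is $\le\tfrac14\sum_i d_i\|\nabla v_i\|_2^2/u_{i,\infty}+\tfrac\lambda4\sum_i\|v_i\|_2^2/u_{i,\infty}$; absorbing this and discarding the remaining nonnegative gradient term leaves $\tfrac{d}{dt}\sum_i\|v_i\|_2^2/u_{i,\infty}\le -\tfrac\lambda2\sum_i\|v_i\|_2^2/u_{i,\infty}$, and Gronwall's lemma gives the asserted decay with $\beta=\lambda/2$.

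It remains to verify that the smallness hypothesis $\sum_i\|v_i(t)\|_2\le\eta$ is never lost. I would fix $\eta$ as above and set $T^\ast=\sup\{\tau\in[t_0,T_{max}):\ \sum_i\|v_i(t)\|_2^2\le\eta^2\ \text{for all}\ t\in[t_0,\tau]\}$; this is positive because $t\mapsto v_i(t)$ is continuous into $L^2(\Omega)$ (Theorem \ref{local}) and, by \eqref{small_v}, $\sum_i\|v_i(t_0)\|_2^2\le\bigl(2\sum_i\|v_{i,0}\|_2\bigr)^2\le4\varepsilon^2<\eta^2$ provided $\varepsilon$ is small. On $[t_0,T^\ast)$ the differential inequality above holds, so $\sum_i\|v_i(t)\|_2^2/u_{i,\infty}$ is nonincreasing and hence $\sum_i\|v_i(t)\|_2^2\le C_{u_\infty}\varepsilon^2$ there; shrinking $\varepsilon$ so that $C_{u_\infty}\varepsilon^2<\eta^2$ forces $T^\ast=T_{max}$ by continuity, which proves the lemma on all of $[t_0,T_{max})$. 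The step I expect to be the main obstacle is the treatment of the top-order nonlinear term $\|v\|_{\mu+1}^{\mu+1}$: it is borderline, and it is precisely the exact relation $\mu=1+4/d$ together with $d\le4$ (which keeps the exponent subcritical) that produces the $\|v\|_{H^1}^2$ scaling required for absorption with a leftover positive power of $\|v\|_2$ making the coefficient small in the close-to-equilibrium regime; everything else is routine once the continuity bookkeeping is arranged so that the smallness of $\|v\|_2$ is available while it is being proved.
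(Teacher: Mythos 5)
Your argument is correct and follows essentially the same route as the paper: test with $v_i/u_{i,\infty}$, invoke Lemma \ref{spectral-gap}, absorb the nonlinear remainder via Gagliardo--Nirenberg using the exact relation $\mu=1+4/d$ so that the $H^1$-norm appears to power two, and close with smallness of $\|v\|_2$. The only (immaterial) deviations are your Young-inequality treatment of the lower-order term $\|v\|_{2+\delta}^{2+\delta}$, where the paper interpolates directly through $L^{2+2\delta}$ and $L^{1+\delta}$, and your explicit continuity argument for preserving smallness, which the paper leaves implicit.
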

\begin{proof}
Multiplying \eqref{Taylor} with $v_i/u_{i,\infty}$ and summing over $i=1,\ldots, N$, it yields with the help of \eqref{e2_1}, Lemma \ref{spectral-gap} and H\"older's inequality
\begin{equation*}
	\begin{aligned}
	\frac 12 \frac{d}{dt}\sum_{i=1}^N\frac{\|v_i\|_2^2}{u_{i,\infty}} + \frac 12 \sum_{i=1}^{N}d_i\frac{\|\nabla v_i\|^2}{u_{i,\infty}} + \frac \lambda 2 \sum_{i=1}^{N}\frac{\|v_i\|_2^2}{u_{i,\infty}}
	&\leq \int_{\Omega}\sum_{i=1}^{N}\left|\frac{v_i}{u_{i,\infty}}\right||g_i(v)|dx\\
	&\leq C\int_{\Omega}\sum_{i=1}^{N}\left|\frac{v_i}{u_{i,\infty}}\right|\sum_{j=1}^{N}(|v_j|^{\mu} + |v_j|^{1+{\delta}})dx\\
	&\leq C\sum_{i=1}^{N}(\|v_i\|_{\mu+1}^{\mu+1} + \|v_i\|_{2+{\delta}}^{2+{\delta}}),
	\end{aligned}
\end{equation*}
where $\delta$ is defined in Lemma \ref{g}. Therefore for some $\beta >0$
\begin{equation}\label{e3}
	\frac 12 \frac{d}{dt}\sum_{i=1}^N\frac{\|v_i\|_2^2}{u_{i,\infty}} + \frac \beta 2 \sum_{i=1}^{N}\frac{\|v_i\|_2^2}{u_{i,\infty}}+ \frac \beta 2 \sum_{i=1}^{N}\|v_i\|_{H^1(\Omega)}^2 \leq C\sum_{i=1}^{N}(\|v_i\|_{\mu+1}^{\mu+1} + \|v_i\|_{2+{\delta}}^{2+{\delta}}).
\end{equation}
Since $d\leq 4$ and $\delta \in (0,1]$, the Sobolev embedding yields $H^1(\Omega) \hookrightarrow L^4(\Omega)\hookrightarrow L^{2+2\delta}(\Omega)$. By using the interpolation inequality we get
\begin{equation*}
	\|v_i\|_{2+\delta}^{2+\delta}\leq \|v_i\|_{2+2\delta}^2\|v_i\|_{1+\delta}^{\delta} \leq C\|v_i\|_4^2\|v_i\|_2^\delta \leq C\|v_i\|_{H^1(\Omega)}^2\|v_i\|_2^{\delta}
\end{equation*}
where we used $\|v_i\|_{2\delta} \leq C\|v_i\|_2$ in the second inequality, since $\delta \in (0,1]$.
For the term $\|v_i\|_{\mu+1}^{\mu+1}$ we apply the Gagliardo-Nirenberg inequality and recall that $\mu = \frac{d+4}{d}$ to obtain
\begin{equation*}
	\|v_i\|_{\mu+1}^{\mu+1} = \|v_i\|_{2(d+2)/d}^{2(d+2)/d} \leq C\|v_i\|_{H^1(\Omega)}^2\|v_i\|_2^{4/d}.
\end{equation*}
Therefore, it follows from \eqref{e3} that
\begin{equation*}
	\frac 12 \frac{d}{dt}\sum_{i=1}^{N}\frac{\|v_i\|_2^2}{u_{i,\infty}} + \frac \beta 2 \sum_{i=1}^{N}\frac{\|v_i\|_2^2}{u_{i,\infty}} + \frac \beta 2 \sum_{i=1}^{N}\|v_i\|_{H^1(\Omega)}^2 \leq C\sum_{i=1}^{N}\|v_i\|_{H^1(\Omega)}^2(\|v_i\|_2^{{\delta}} + \|v_i\|_2^{4/d})
\end{equation*}
which implies
\begin{equation*}
\frac{d}{dt}\sum_{i=1}^{N}\frac{\|v_i\|_2^2}{u_{i,\infty}}  \leq - \beta \sum_{i=1}^{N}\frac{\|v_i\|_2^2}{u_{i,\infty}} +  C\sum_{i=1}^{N}\|v_i\|_{H^1(\Omega)}^2(\|v_i\|_2^{\textcolor{blue}{\delta}} + \|v_i\|_2^{4/d} - C) \quad \forall t_0 \leq t < T_{max}.
\end{equation*}
{
From the assumption $\sum_{i=1}^{N}\|v_{i,0}\|_2 \leq \varepsilon$ and \eqref{small_v} we have $\sum_{i=1}^{N}\|v_{i}(t_0)\|_2 \leq 2\varepsilon$. 
Then if we choose $\varepsilon$ small enough then $\sum_{i=1}^{N}\|v_i\|_2^2/u_{i,\infty}$ decreases in time, which in fact leads to}
\begin{equation*}
	\frac{d}{dt}\sum_{i=1}^{N}\frac{\|v_i\|_2^2}{u_{i,\infty}} \leq - \beta \sum_{i=1}^{N}\frac{\|v_i\|_2^2}{u_{i,\infty}} \quad \text{ for all } t_0 \leq t < T_{max}
\end{equation*}
and consequently, by a Gronwall's lemma,
\begin{equation*}
	\sum_{i=1}^{N}\frac{\|v_i(t)\|_2^2}{u_{\infty}} \leq e^{-\beta(t - t_0)}\sum_{i=1}^{N}\frac{\|v_{i}(t_0)\|_2^2}{u_{i,\infty}} \quad \text{ for all } t_0 \leq t < T_{max}.
\end{equation*}
\end{proof}

\noindent{\bf Important notation:} From now on we always denote by $C_T$ a positive constant which depends {\it at most polynomially on $T$}, that is there exists a polynomial $P(x)$ such that $C_T \leq P(T)$ for all $T>0$.

\medskip
The following two lemmas are important in getting global classical solutions to \eqref{Taylor}.
\begin{lemma}\label{heat-regularity}\cite[Lemma 3.3]{CDF14}
	Let $f\in L^p(Q_T)$ with $p\geq 1$ satisfy $\|f\|_{L^p(Q_T)} \leq C_T$ and $u$ be the solution to the heat equation
	\begin{equation}\label{heat-equation}
		\begin{aligned}
			y_t - d\Delta y &= f, && x\in \Omega,\\
			\nabla y\cdot \nu &= 0, && x\in \partial\Omega,\\
			y(x,0) &= y_0(x), && x\in\Omega
		\end{aligned}
	\end{equation}
	with $y_0 \in L^{\infty}(\Omega)$. 
	\begin{itemize}
		\item[(i)] If $p < (d+2)/2$ then 
		\begin{equation}\label{tron}
		\|y\|_{L^s(Q_T)} \leq C_T \quad \text{ for all } \quad s < \frac{(d+2)p}{d+2-2p}.
		\end{equation}
		\item[(ii)] If $p= (d+2)/2$ then 
		\begin{equation}\label{vuong}
		\|y\|_{L^r(Q_T)} \leq C_T \quad \text{ for all } \quad 1\leq r <+\infty.
		\end{equation}
		\item[(iii)] If $p> (d+2)/2$ then
		\begin{equation}\label{meo}
			\|y\|_{L^{\infty}(Q_T)} \leq C_T
		\end{equation}
	\end{itemize}
\end{lemma}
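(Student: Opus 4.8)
The plan is to prove the three estimates via the Duhamel representation of the solution together with Gaussian bounds for the Neumann heat kernel and Young's inequality for space--time convolutions. By linearity write $y=y_1+y_2$, where $y_1$ solves \eqref{heat-equation} with $f\equiv0$ and initial datum $y_0$, and $y_2$ solves \eqref{heat-equation} with zero initial datum and source $f$. For $y_1$ the maximum principle for the scalar heat equation with Neumann boundary condition gives $\|y_1(t)\|_{\infty}\le\|y_0\|_{\infty}$ for all $t>0$, hence $\|y_1\|_{L^{\infty}(Q_T)}\le\|y_0\|_{\infty}$ and a fortiori $\|y_1\|_{L^s(Q_T)}\le(|\Omega|\,T)^{1/s}\|y_0\|_{\infty}\le C_T$ for every $s\le\infty$. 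So it only remains to estimate $y_2$.

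For $y_2$ I would use the representation
\[
	y_2(x,t)=\int_0^t\!\!\int_{\Omega}G(x,\xi,t-\tau)f(\xi,\tau)\,d\xi\,d\tau ,
\]
where $G$ is the fundamental solution of $\partial_t-d\Delta$ on $\Omega$ with homogeneous Neumann boundary condition. It is classical that $G$ satisfies the Gaussian upper bound $0\le G(x,\xi,t)\le Ct^{-d/2}\exp(-c|x-\xi|^2/t)$ for $0<t\le1$, while $G(x,\xi,t)\le C$ for $t\ge1$ (on a bounded domain the heat semigroup stays bounded but does not decay). Dominating $|y_2|$ by the space--time convolution $\int_0^t\!\int_{\Omega}K(x-\xi,t-\tau)|f(\xi,\tau)|\,d\xi\,d\tau$ with $K(z,s)=Cs^{-d/2}e^{-c|z|^2/s}$ for $0<s\le1$ and $K(z,s)=C$ for $s>1$, Young's inequality for convolutions yields
\[
	\|y_2\|_{L^s(Q_T)}\le\|K\|_{L^a(Q_T)}\,\|f\|_{L^p(Q_T)},\qquad \frac1a=1+\frac1s-\frac1p ,
\]
where $\|K\|_{L^a(Q_T)}^a\le\int_0^T\!\int_{B}K(z,s)^a\,dz\,ds$ with $B$ a fixed ball containing $\Omega-\Omega$. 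A direct computation using the Gaussian part gives $\int_{\mathbb R^d}K(z,s)^a\,dz\le Cs^{-d(a-1)/2}$ for $0<s\le1$ and $\le C$ for $1<s<T$, hence $\|K\|_{L^a(Q_T)}\le C_T$ (with $C_T$ depending linearly on $T$ through the large-time part) \emph{exactly when} $d(a-1)/2<1$, i.e.\ $a<\tfrac{d+2}{d}$.

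It then remains to translate the admissibility condition $a<\tfrac{d+2}{d}$ through the relation $\tfrac1a=1+\tfrac1s-\tfrac1p$, which gives $\tfrac1s>\tfrac1p-\tfrac{2}{d+2}$. This is precisely the trichotomy in the statement: if $p<\tfrac{d+2}{2}$ it reads $s<\tfrac{(d+2)p}{d+2-2p}$, proving (i); if $p=\tfrac{d+2}{2}$ it reads $s<\infty$, proving (ii); and if $p>\tfrac{d+2}{2}$ then even $s=\infty$ is admissible (in which case the estimate is just Hölder's inequality, since $K\in L^{p'}(Q_T)$ because $p'<\tfrac{d+2}{d}$), proving (iii). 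Combining with the bound for $y_1$ completes the proof. The point requiring the most care is the bookkeeping of the dependence on $T$: because the Neumann heat kernel on a bounded domain is only bounded (not decaying) for large times, the time integral over $(1,T)$ is responsible for the polynomial---in fact linear---growth in $T$, which is exactly why the conclusion carries the constant $C_T$ rather than a constant uniform in $T$; all the rest is the standard parabolic Sobolev embedding read off from Young's inequality. (Alternatively one could invoke maximal $L^p$-regularity on $(\tau,T)$ for a small $\tau>0$, using instantaneous smoothing to place $y(\tau)$ in a suitable trace space and then the embedding $W^{2,1}_p(Q_{\tau,T})\hookrightarrow L^s(Q_{\tau,T})$; I prefer the kernel argument here because it exhibits the $T$-dependence transparently.)
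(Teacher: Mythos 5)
Your argument is correct, and the bookkeeping of exponents checks out: the admissibility condition $a<\tfrac{d+2}{d}$ for $\|K\|_{L^a(Q_T)}$ combined with $\tfrac1a=1+\tfrac1s-\tfrac1p$ does give exactly the trichotomy of the lemma, and the maximum-principle bound on the homogeneous part is fine. The route differs from the paper's in scope and packaging. The paper does not reprove (i) and (ii) at all --- it simply cites \cite[Lemma 3.3]{CDF14} for them --- and proves only (iii), using the Duhamel formula together with the abstract semigroup smoothing estimate $\|S(t)g\|_{\infty}\le C\|g\|_{p}\bigl(1+t^{-d/(2p)}\bigr)$ and H\"older's inequality in the time variable, which requires $\frac{d}{2(p-1)}<1$, i.e.\ $p>\frac{d+2}{2}$; this is essentially your $s=\infty$ endpoint of Young's inequality, so for (iii) the two proofs are the same computation in different clothing. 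What your version buys is a single unified mechanism (Gaussian kernel bound plus space--time Young) that yields all three cases at once, making the lemma self-contained; the cost is that you must invoke Gaussian upper bounds for the Neumann heat kernel on a bounded domain (available here since $\partial\Omega$ is assumed $C^{2+\epsilon}$), whereas the paper's semigroup estimate is quoted as a black box. One small imprecision: the large-time contribution gives $\|K\|_{L^a(Q_T)}^a\le C\,T$, hence $\|K\|_{L^a(Q_T)}\le C\,T^{1/a}$ rather than a linearly growing constant; this is harmless, since all that is needed (and all that $C_T$ asserts) is at most polynomial growth in $T$.
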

\begin{proof}
	The proof of (i) and (ii) are given in \cite[Lemma 3.3]{CDF14}. We give here the proof for \eqref{meo}. Denote by $S(t):= e^{t(d\Delta)}$ the semigroup generated by the operator $d\Delta$ with Neumann boundary condition. Then the solution to \eqref{heat-equation} is represented as
	\begin{equation*}
		y(t) = S(t)y_0 + \int_0^tS(t-s)f(s)ds.
	\end{equation*}
	By the classical estimate $\|S(t)y_0\|_{r} \leq C\|y_0\|_q\left(1+ t^{-\frac{d}{2}\left(\frac 1q - \frac 1r\right)} \right)$ for all $q \leq r \leq \infty$ (see e.g. \cite[Lemma 2.5]{MC16}), we estimate for all $0 < t \leq T$,
	\begin{equation*}
		\begin{aligned}
			\|y(t)\|_{\infty} &\leq \|S(t)y_0\|_{\infty} + \int_0^t\|S(t-s)f(s)\|_{\infty}ds\\
			&\leq C\|y_0\|_{\infty} + C\int_0^t\|f(s)\|_p\left(1 + (t-s)^{-\frac{d}{2p}}\right)ds\\
			&\leq C\|y_0\|_{\infty} + C\|f\|_{L^p(Q_T)}\left[\int_0^t\left(1+(t-s)^{-\frac{d}{2(p-1)}}\right)ds \right]^{\frac{p-1}{p}}
		\end{aligned}
	\end{equation*}
	With $p > (d+2)/2$ with have $d/(2(p-1)) < 1$, and thus the integral on the right hand side converges, and has polynomial behaviour in $t$, and consequently in $T$. This completes the proof.
	\end{proof}
\begin{lemma}\label{heat-regularity-2}\cite{MC16}
	Consider the heat equation \eqref{heat-equation}. If $f\in L^{\infty}(0,T;L^p(\Omega))$ with $p>d/2$ and $\|f\|_{L^{\infty}(0,T;L^p(\Omega))} \leq C_T$, then 
	\begin{equation}\label{thoi}
		\|y\|_{L^{\infty}(Q_T)} \leq C_T
	\end{equation}
	with $C_T$ is a constant depending {\normalfont at most polynomially} on $T$.
\end{lemma}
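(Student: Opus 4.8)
The plan is to run the variation-of-constants (Duhamel) formula together with the standard $L^q$--$L^r$ smoothing estimates for the Neumann heat semigroup, essentially mirroring the proof of part (iii) of Lemma \ref{heat-regularity}, the only difference being that now the spatial $L^p$ norm of the source is controlled uniformly in time rather than merely in $L^p(Q_T)$.

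First I would denote by $S(t) = e^{td\Delta}$ the semigroup generated by $d\Delta$ with homogeneous Neumann boundary condition, and write the solution to \eqref{heat-equation} as
\begin{equation*}
	y(t) = S(t)y_0 + \int_0^t S(t-s)f(s)\,ds.
\end{equation*}
The contribution of the initial datum is harmless: since $S(t)$ is a contraction on $L^\infty(\Omega)$ we have $\|S(t)y_0\|_\infty \leq \|y_0\|_\infty$, a constant independent of $T$. For the Duhamel term I would use the classical estimate $\|S(\sigma)g\|_\infty \leq C\left(1 + \sigma^{-\frac{d}{2p}}\right)\|g\|_p$ for all $\sigma>0$ (the same estimate quoted from \cite[Lemma 2.5]{MC16} in the proof of Lemma \ref{heat-regularity}), which gives, for $0<t\leq T$,
\begin{equation*}
	\|y(t)\|_\infty \leq \|y_0\|_\infty + C\sup_{0\leq s\leq T}\|f(s)\|_p \int_0^t \left(1 + (t-s)^{-\frac{d}{2p}}\right)\,ds.
\end{equation*}

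The assumption $p>d/2$ is precisely what makes $\frac{d}{2p}<1$, so that $\int_0^t (t-s)^{-\frac{d}{2p}}\,ds = \frac{t^{1-\frac{d}{2p}}}{1-\frac{d}{2p}}$ converges and grows at most like a power of $t$; together with $\int_0^t 1\,ds = t$ this produces a bound that is polynomial in $T$. Combining this with the hypothesis $\|f\|_{L^\infty(0,T;L^p(\Omega))}\leq C_T$ and taking the supremum over $t\in(0,T)$ yields $\|y\|_{L^\infty(Q_T)}\leq C_T$ with $C_T$ depending at most polynomially on $T$.

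Since the argument is such a direct variant of the one already carried out for \eqref{meo}, there is no real obstacle here; the only substantive point is the strict inequality $p>d/2$, which guarantees integrability of the singular kernel $(t-s)^{-d/(2p)}$ at the diagonal $s=t$. Were one to have only $p=d/2$, the Duhamel integral would diverge logarithmically and a bootstrap in the integrability exponent would be required instead.
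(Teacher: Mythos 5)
Your argument is correct: the paper does not prove this lemma itself (it is cited from \cite{MC16} and noted to be classical, e.g.\ in \cite{Ladyzhenskaya}), but your Duhamel argument with the $L^p$--$L^\infty$ smoothing estimate is exactly the same technique the paper uses to prove Lemma \ref{heat-regularity}(iii), simplified by the fact that the uniform-in-time bound on $\|f(s)\|_p$ lets you pull the supremum out of the integral instead of invoking H\"older in time, which is precisely why the threshold drops from $p>(d+2)/2$ to $p>d/2$. The key points — integrability of $(t-s)^{-d/(2p)}$ for $p>d/2$ and the resulting polynomial-in-$T$ growth — are correctly identified.
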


\begin{remark}
	We remark that both regularity results in Lemmas \ref{heat-regularity} and \ref{heat-regularity-2} are classical and can be found in the literature, for instance in the classical book \cite[Corollary of Theorem 9.1]{Ladyzhenskaya}. Here we emphasise in \eqref{tron}, \eqref{vuong} and \eqref{thoi} the polynomial dependence of the constant $C_T$ on $T$ since it's important in proving Theorem \ref{theo:main}. In fact, the exponential decay in $L^{\infty}$-norm is a result of the exponential decay in lower norm, say $L^2$-norm in Lemma \ref{decay-L2}, and the polynomial growth in $T$ of higher order norms (see e.g. \eqref{interpolation}).
\end{remark}
We are now ready to give
\begin{proof}[Proof of Theorem \ref{theo:main}]
	Multiplying \eqref{Taylor} with $p|v_i|^{p-2}v_i$ (or more precisely a smoothed approximation of $p|v_i|^{p-2}v_i$ and letting the smoothing go to zero) then integrating over $\Omega$, we have
	\begin{equation}\label{e4}
		\frac{d}{dt}\|v_i\|_p^p + d_ip(p-1)\int_{\Omega}|v_i|^{p-2}|\nabla v_i|^2dx = p\int_{\Omega}\left(L_i v + g_i(v)\right)|v_i|^{p-2}v_idx.
	\end{equation}
	For the second term we have
	\begin{equation*}
		d_ip(p-1)\int_{\Omega}|v_i|^{p-2}|\nabla v_i|^2dx = \frac{4d_i(p-1)}{p}\|\nabla (v_i^{p/2})\|_2^2.
	\end{equation*}
	For the right hand side of \eqref{e4}, since $L_iv$ is linear (see \eqref{linearised-operator}) we estimate by H\"older's inequality
	\begin{equation*}
		\int_{\Omega}L_i v|v_i|^{p-2}v_idx \leq \int_{\Omega}|L_iv||v_i|^{p-1}dx \leq C\sum_{i=1}^{N}\|v_i\|_{p}^p.
	\end{equation*}
	For the term concerning the nonlinearities we use \eqref{e2_1} and H\"older's inequality to obtain
	\begin{equation*}
		\left|\int_{\Omega}g_i(v)|v_i|^{p-2}v_i dx\right| \leq C\int_{\Omega}|v_i|^{p-1}\sum_{j=1}^{N}(|v_j|^{\mu} + |v_j|^{1+{\delta}})dx \leq C\sum_{i=1}^{N}\left(\|v_i\|_{\mu+p-1}^{\mu+p-1} + \|v_i\|_{p+{\delta}}^{p+{\delta}}\right).
	\end{equation*}
	Therefore we obtain from summing \eqref{e4} over $i=1,\ldots, N$ that
	\begin{equation}\label{e5}
		\frac{d}{dt}\sum_{i=1}^{N}\|v_i\|_p^p + C\sum_{i=1}^{N}\|\nabla(v_i^{p/2})\|_2^2 \leq C\sum_{i=1}^{N}\left(\|v_i\|_{\mu+p-1}^{\mu+p-1} + \|v_i\|_{p+{\delta}}^{p+{\delta}} + \|v_i\|_p^p\right).
	\end{equation}
	Since $\Omega$ is bounded and $\mu \geq 2$ we have
	\begin{equation*}
		\|v_i\|_p^p \leq \|v_i\|_{\mu+p-1}^p \leq C(\|v_i\|_{\mu+p-1}^{\mu+p-1} + 1) \quad \text{ and } \quad \|v_i\|_{p+{\delta}}^{p+{\delta}} \leq C(\|v_i\|_{\mu+p-1}^{\mu+p-1}+1).
	\end{equation*}
	Then by adding $\sum_{i=1}^{N}\|v_i\|_p^p = \sum_{i=1}^{N}\|v_i^{p/2}\|_2^2$ to both sides of \eqref{e5} we obtain
	\begin{equation}\label{e6}
		\frac{d}{dt}\sum_{i=1}^{N}\|v_i\|_p^p + C\sum_{i=1}^{N}\|v_i^{p/2}\|_{H^1(\Omega)}^2 \leq C\sum_{i=1}^{N}\|v_i\|_{\mu+p-1}^{\mu+p-1} + C.
	\end{equation}
	We now consider the cases three $d=1$, $d=2$, $d=3$ and $d=4$ separately due to the different Sobolev embeddings in each case.
	
	\medskip
	{\bf The case $d = 1$}, which implies $\mu = 5$. Choosing $p = 4$ in \eqref{e6} we have
	\begin{equation*}
		\frac{d}{dt}\sum_{i=1}^{N}\|v_i\|_4^4 + C\sum_{i=1}^{N}\|v_i^2\|_{H^1(\Omega)}^2 \leq C\sum_{i=1}^{N}\|v_i\|_{8}^{8} + C = C\sum_{i=1}^{N}\|v_i^2\|_4^4 + C.
	\end{equation*}
	Applying the Gagliardo-Nirenberg inequality in one dimension 
	\[
		\|v_i^2\|_4^4 \leq C\|v_i^2\|_{H^1(\Omega)}^2\|v_i^2\|_{1}^2 = C\|v_i^2\|_{H^1(\Omega)}^2\|v_i\|_2^4.
	\]
	Thus
	\begin{equation}\label{plus}
		\frac{d}{dt}\sum_{i=1}^{N}\|v_i\|_4^4 + C\sum_{i=1}^{4}\|v_i^2\|_{H^1(\Omega)}^2 \leq C\sum_{i=1}^{N}\|v_i^2\|_{H^1(\Omega)}^2(\|v_i\|_2^2 - 1) + C.
	\end{equation}
	Thanks to Lemma \ref{decay-L2}, if $\sum_{i=1}^{N}\|v_{i,0}\|_2 \leq \varepsilon$ for small enough $\varepsilon$ we also have $\sum_{i=1}^{N}\|v_i(t)\|_2 \leq 2\varepsilon$ for all $t_0 \leq t < T_{max}$, and hence it follows from \eqref{plus} and the one dimensional embedding $H^1(\Omega)\hookrightarrow L^{\infty}(\Omega)$ that
	\begin{equation*}
		\frac{d}{dt}\sum_{i=1}^{N}\|v_i\|_4^4 + C\sum_{i=1}^{N}\|v_i\|_{\infty}^4 \leq C \quad \text{ for all } \quad t_0 \leq t < T_{max}.
	\end{equation*}
	Integrating on $(0,T)$ for any $T\in [t_0,T_{max})$ yields
	\begin{equation*}
		\sum_{i=1}^{N}\|v_i(T)\|_4^4 + C\sum_{i=1}^{N}\int_{t_0}^T\|v_i(\tau)\|_{\infty}^4d\tau \leq \sum_{i=1}^{N}\|v_{i}(t_0)\|_4^4 + CT.
	\end{equation*}
	That means for all $i=1, \ldots, N$ it holds $v_i \in L^{\infty}(t_0,T;L^4(\Omega))\cap L^{4}(t_0,T;L^{\infty}(\Omega))$. It then follows from interpolation that $v_i \in L^{8}(Q_{t_0,T})$ for all $i=1,\ldots, N$. This implies that the right hand side of the equation in \eqref{Taylor} belongs to $L^{8/5}(Q_{t_0,T})$, which with the help of Lemma \ref{heat-regularity}(iii) leads to 
	$\|v_i\|_{L^{\infty}}(Q_{t_0,T}) \leq C_T$. Since $C_T$ grows at most polynomially in $T$, this implies
	\begin{equation*}
		\lim_{t\to T_{max}-}\|v_i(t)\|_{\infty} < +\infty
	\end{equation*}
	hence $\lim_{t\to T_{max}-}\|u_i(t)\|_{\infty} < +\infty$ and therefore $T_{max} = +\infty$.
	
	To prove the convergence to equilibrium of $u_i$ to $u_{i,\infty}$ in $L^{\infty}$-norm, we first show that $\|v_i(T)\|_{H^2(\Omega)} \leq C_T$ for all $T \geq t_0$. Indeed, from $\|\partial_tv_i\|_{L^2(Q_{t_0,T})} \leq C_T$ and $\|v_i\|_{L^{\infty}(Q_{t_0,T})} \leq C_T$ and the growth condition of $f_i(u)$ we have
	\begin{equation*}
		\int_{t_0}^T\int_{\Omega}|f_i(v + u_{\infty})|^2dx \leq C(\|v\|_{L^{\infty}(Q_{t_0,T})}^{2\mu} + T) \leq C_T
	\end{equation*}
	and
	\begin{equation*}
		\begin{aligned}
		\int_{t_0}^T\int_{\Omega}|\partial_t(f_i(v + u_{\infty}))|^2dxdt &\leq \int_{t_0}^T\int_{\Omega}|\nabla f_i(v + u_{\infty})|^2|\partial_tv_i|^2dxdt\\
		&\leq C(\|v\|_{L^{\infty}(Q_{t_0,T})}^{2\mu - 2} + T)\|\partial_tv_i\|_{L^2(Q_{t_0,T})}^2 \leq C_T,
		\end{aligned}
	\end{equation*}
	thus
	\begin{equation*}
		\|f_i(v + u_{\infty})\|_{H^1(t_0,T;L^2(\Omega))} \leq C_T.
	\end{equation*}
	Therefore, we can apply regularizing effect of linear parabolic equation $\partial_tv_i - d_i\Delta v_i = f_i(v + u_{\infty})$ (see e.g. \cite[Theorem 7.1.5]{Evans} for Dirichlet boundary problems. Analog for Neumann boundary problem should follow similarly) to obtain
	\begin{equation*}
		\|v_i(T)\|_{H^2(\Omega)} \leq C(\|g_i(v)\|_{H^1(t_0,T;L^2(\Omega))} + \|v_i(t_0)\|_{H^2(\Omega)}) \leq C_T \quad \text{for all } T\geq t_0.
	\end{equation*}
	Therefore, by interpolation 
	\begin{equation}\label{interpolation}
		\|v_i(T)\|_{\infty} \leq C\|v_i(T)\|_{H^2(\Omega)}^{\theta}\|v_i(T)\|_2^{1 - \theta} \leq C_T^{\theta}e^{-\beta(1-\theta) T} \leq Ce^{-\beta' T} \quad \forall T\geq t_0,
	\end{equation}
	for some $0<\beta' < \beta(1-\theta)$. This in combination with $v_i \in L^{\infty}((0,t_0)\times \Omega)$ (implied from Theorem 4.1) completes the proof of Theorem \ref{theo:main} in one dimension.	
	
	\medskip
	{\bf The case $d=2$} with $\mu = 3$. Choosing again $p=4$ in \eqref{e6} and applying the two dimensional Gagliardo-Nirenberg inequality
	\begin{equation*}
		\|v_i\|_6^6 = \|v_i^2\|_3^3 \leq C\|v_i^2\|_{H^1(\Omega)}^2\|v_i^2\|_{1} = C\|v_i^2\|_{H^1(\Omega)}^2\|v_i\|_2^2
	\end{equation*}
	we get from \eqref{e6} that
	\begin{equation*}
		\frac{d}{dt}\sum_{i=1}^{N}\|v_i\|_4^4 + C\sum_{i=1}^{N}\|v_i^2\|_{H^1(\Omega)}^2 \leq C\sum_{i=1}^{N}\|v_i^2\|_{H^1(\Omega)}^2\|v_i\|_2^2 + C.
	\end{equation*}
	By using $\sum_{i=1}^{N}\|v_i\|_2$ is small enough, and the two dimensional embedding $H^1(\Omega)\hookrightarrow L^q(\Omega)$ for any $q\in [1,\infty)$ we obtain
	\begin{equation*}
		\frac{d}{dt}\sum_{i=1}^{N}\|v_i\|_4^4 + C\sum_{i=1}^{N}\|v_i\|_{2q}^4 \leq C
	\end{equation*}
	and consequently, by integrating on $(0,T)$,
	\begin{equation*}
		\sum_{i=1}^{N}\|v_i(T)\|_4^4 + C\sum_{i=1}^{N}\int_{t_0}^T\|v_i(\tau)\|_{2q}^4d\tau \leq \sum_{i=1}^{N}\|v_{i}(t_0)\|_4^4 + CT.
	\end{equation*}
	Therefore $v_i \in L^{\infty}(t_0,T;L^4(\Omega))\cap L^4(t_0,T;L^{2q}(\Omega)) \hookrightarrow L^{8 - 8/q}(Q_{t_0,T})$ by interpolation. Since $q \in [1,\infty)$ arbitrary, it holds in fact $v_i \in L^r(Q_{t_0,T})$ for all $1\leq r < 8$. Because $\mu = 3$, the right hand side of \eqref{Taylor} belongs to $L^s(Q_{t_0,T})$ for all $1\leq s < 8/3$, and in particular in $L^{2+\alpha}(Q_{t_0,T})$ with some $\alpha >0$. Lemma \ref{heat-regularity}(iii) implies $v_i\in L^{\infty}(Q_{t_0,T})$. An interpolation argument similar to \eqref{interpolation} concludes the proof of Theorem \ref{theo:main} in two dimensions.
	
		\medskip
		{\bf The case $d=3$} and $\mu=7/3$. In this case we choose $p>2$ arbitrary. Using the dimensional Sobolev embedding $H^1(\Omega)\hookrightarrow L^6(\Omega)$ in \eqref{e6} we have
		\begin{equation*}
			\frac{d}{dt}\sum_{i=1}^{N}\|v_i\|_p^p + C\sum_{i=1}^{N}\|v_i\|_{3p}^p \leq C\sum_{i=1}^{N}\|v_i\|_{p+4/3}^{p+4/3} + C.
		\end{equation*}
		By the interpolation inequality 
		\[
			\|v_i\|_{p+4/3}^{p+4/3} \leq \|v_i\|_{3p}^p\|v_i\|_2^{4/3}
		\]
		and $\sum_{i=1}^{N}\|v_i\|_2$ is small, we arrive at
		\begin{equation*}
			\frac{d}{dt}\sum_{i=1}^{N}\|v_i\|_p^p \leq C
		\end{equation*}
	and consequently
	\begin{equation*}
		\sum_{i=1}^{N}\|v_i(T)\|_p^p \leq \sum_{i=1}^{N}\|v_{i}(t_0)\|_p^p + CT,
	\end{equation*}
	which means $v_i\in L^{\infty}(t_0,T;L^p(\Omega))$ for any $p\in [1,\infty)$, and hence the right hand side of \eqref{e6} belongs to $L^{\infty}(t_0,T;L^p(\Omega))$ for any $p\in [1,\infty)$. This is enough to apply Lemma \ref{heat-regularity-2} to obtain $v_i \in L^{\infty}(Q_{t_0,T})$ with $\|v_i\|_{L^{\infty}(Q_{t_0,T})}\leq C_T$ and thus concludes Theorem \ref{theo:main} in three dimensions thanks to an argument similarly to \eqref{interpolation}.
		
	\medskip
	{\bf The case $d= 4$} and $\mu=2$. In this case we also choose $p>2$ arbitrary. Using the Sobolev embedding $H^1(\Omega)\hookrightarrow L^4(\Omega)$ in four dimensions to \eqref{e6} we have
	\begin{equation*}
		\frac{d}{dt}\sum_{i=1}^{N}\|v_i\|_p^p + C\sum_{i=1}^{N}\|v_i\|_{2p}^p \leq C\sum_{i=1}^{N}\|v_i\|_{p+1}^{p+1} + C.
	\end{equation*}
	By the interpolation inequality 
	\[
		\|v_i\|_{p+1}^{p+1} \leq \|v_i\|_{2p}^p\|v_i\|_2
	\]
	and $\sum_{i=1}^{N}\|v_i\|_2$ is small, we arrive at
	\begin{equation*}
		\frac{d}{dt}\sum_{i=1}^{N}\|v_i\|_p^p \leq C
	\end{equation*}
	and consequently
	\begin{equation*}
		\sum_{i=1}^{N}\|v_i(T)\|_p^p \leq \sum_{i=1}^{N}\|v_{i}(t_0)\|_p^p + CT,
	\end{equation*}
	which means $v_i\in L^{\infty}(t_0,T;L^p(\Omega))$ for any $p\in [1,\infty)$, and hence the right hand side of \eqref{e6} belongs to $L^{\infty}(t_0,T;L^p(\Omega))$ for any $p\in [1,\infty)$. This is enough to apply Lemma \ref{heat-regularity-2} to obtain $v_i \in L^{\infty}(Q_{t_0,T})$ with $\|v_i\|_{L^{\infty}(Q_{t_0,T})}\leq C_T$.
	
	Note that in the case of four dimensions the interpolation \eqref{interpolation} needs to be slightly modified since the embedding $H^2(\Omega) \hookrightarrow L^{\infty}(\Omega)$ does not hold. Due to the smoothing effect of the heat kernel, we know that the local solution in Theorem \ref{local} in fact belongs to $H^k(\Omega)$ for all $k\geq 1$ for any positive time, provided the domain $\Omega$ is correspondingly regular enough. Therefore, we can assume now that $u_i(t_0)\in H^3(\Omega)$ and repeat the arguments similarly to the case $d=1$ to obtain an a priori estimate $\|v_i(t)\|_{H^3(\Omega)} \leq C_T$. Hence the interpolation \eqref{interpolation}, now with the norm $\|\cdot\|_{H^3(\Omega)}$ in place of $\|\cdot\|_{H^2(\Omega)}$, leads to the exponential convergence to equilibrium in $L^{\infty}$-norm.
	\end{proof}
\section{Extension to higher dimensions}\label{extension}
As mentioned in the introduction, the main advantage of the $L^2$-setting is the spectral gap of the linearised operator which leads to a global bound on $L^2$-norm of solutions. This however also leads to the restriction $d\leq 4$ and $\mu = 1 + \frac 4d$ due to the fact that the bound of $L^2$-norm is in general not enough to control the nonlinearities of higher order and in higher dimensions. It is natural to expect that in such cases the initial data need to be small in $L^p$-norm, with some $p>2$ depending on $d$ and $\mu$. More importantly, one needs to prove the control of solution in $L^p$-norm (or $L^{p+\varepsilon}$-norm) to obtain the regularity of solution up to $L^{\infty}$. This remains as an open question. In this section we show how to proceed once the suitable control on solutions is established.
\begin{proposition}\label{pro:extension}
	Let $d\geq 3$ and $\Omega\subset \mathbb R^d$ be a bounded domain with smooth boundary $\partial\Omega$ (e.g. $\partial\Omega$ is of class $C^{2+\epsilon}$ for $\epsilon>0$). Assume that $u_0\in L^{\infty}(\Omega)^N$ and let
	\begin{equation}\label{p0}
		p_0 = \frac{d(\mu-1)}{2}.
	\end{equation}
	If the solution $v$ to the linearised system \eqref{Taylor} satisfies either
	\begin{equation}\label{assump-1}
		\sum_{i=1}^{N}\|v_i(t)\|_{p_0}  \leq \varepsilon \quad \text{ for all } \quad t>0,
	\end{equation}
	for small $\varepsilon$ or
	\begin{equation}\label{assump-2}
		\sum_{i=1}^{N}\|v_i(t)\|_{p_0+\delta} \leq \widehat C(t) \quad \text{ for all } \quad t>0,
	\end{equation}
	for some positive constant $\delta>0$ and $\widehat C(t)$ is a continuous function, then the classical solution $v$ to \eqref{Taylor}, and consequently the classical solution $u$ to \eqref{system}, exists globally.
\end{proposition}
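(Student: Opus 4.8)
The plan is to mimic the structure of the proof of Theorem \ref{theo:main}, but now starting from a bound in $L^{p_0}$ rather than $L^2$. The key observation is that $p_0 = \frac{d(\mu-1)}{2}$ is precisely the scaling-critical exponent for the nonlinearity of order $\mu$: when $\|v_i\|_{p_0}$ is bounded, the nonlinear term $|v_i|^\mu$ can be absorbed by the dissipation, exactly as $L^2$ handled $\mu = 1+4/d$. Concretely, I would multiply \eqref{Taylor} by $p|v_i|^{p-2}v_i$ for a suitable $p \ge p_0$ (to be chosen depending on $d$ and $\mu$), integrate over $\Omega$, and use the identity $d_ip(p-1)\int_\Omega |v_i|^{p-2}|\nabla v_i|^2\,dx = \frac{4d_i(p-1)}{p}\|\nabla(|v_i|^{p/2})\|_2^2$ together with the linear estimate $\int_\Omega L_i v\,|v_i|^{p-2}v_i\,dx \le C\sum_j \|v_j\|_p^p$ and Lemma \ref{g} for $g_i$, arriving at the analogue of \eqref{e6}: $\frac{d}{dt}\sum_i\|v_i\|_p^p + C\sum_i\|\,|v_i|^{p/2}\|_{H^1(\Omega)}^2 \le C\sum_i\|v_i\|_{\mu+p-1}^{\mu+p-1} + C$.

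Next I would control the critical term $\|v_i\|_{\mu+p-1}^{\mu+p-1}$ by the Gagliardo--Nirenberg inequality applied to $w_i := |v_i|^{p/2}$: writing $\|v_i\|_{\mu+p-1}^{\mu+p-1} = \|w_i\|_{2(\mu+p-1)/p}^{2(\mu+p-1)/p}$ and interpolating between $\|\nabla w_i\|_2$ and $\|w_i\|_{2p_0/p} = \|v_i\|_{p_0}^{p_0/p}$, one finds that the choice $p_0 = d(\mu-1)/2$ makes the interpolation exponent on $\|v_i\|_{p_0}$ strictly positive and the exponent on $\|\nabla w_i\|_2$ equal to $2$. Hence $\|v_i\|_{\mu+p-1}^{\mu+p-1} \le C\|\,|v_i|^{p/2}\|_{H^1(\Omega)}^2\,\big(\|v_i\|_{p_0}^{\sigma} + 1\big)$ for some $\sigma>0$. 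Under hypothesis \eqref{assump-1}, choosing $\varepsilon$ small enough makes $C\|v_i\|_{p_0}^\sigma$ less than the dissipation coefficient, so the right side is absorbed and $\frac{d}{dt}\sum_i\|v_i\|_p^p \le C$; under hypothesis \eqref{assump-2} one first needs the mildly stronger exponent $p_0 + \delta$ to gain a genuine power of $\|\nabla w_i\|_2$ \emph{below} $2$ (or equivalently a decaying prefactor from a slightly supercritical interpolation), so that Young's inequality absorbs the nonlinear term at the cost of a locally bounded, hence $C_T$, remainder. Either way one obtains $\sum_i\|v_i(T)\|_p^p \le \sum_i\|v_i(t_0)\|_p^p + C_T$, i.e. $v_i \in L^\infty_{loc}([t_0,\infty); L^p(\Omega))$, and iterating/bootstrapping $p$ upward through the same estimate (each finite $p$ controlling $\mu+p-1$, which feeds the next step) yields $v_i \in L^\infty(t_0,T;L^p(\Omega))$ for all $p<\infty$ with $C_T$-growth. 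Then the right-hand side of \eqref{Taylor} lies in $L^\infty(t_0,T;L^p(\Omega))$ for $p>d/2$, so Lemma \ref{heat-regularity-2} gives $\|v_i\|_{L^\infty(Q_{t_0,T})} \le C_T$, and the blow-up criterion in Theorem \ref{local} forces $T_{max} = +\infty$.

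The step I expect to be the main obstacle — and the reason the statement is phrased as a conditional proposition rather than a theorem — is that the first energy estimate only closes if one already knows $\sum_i\|v_i\|_{p_0}$ (or $\|v_i\|_{p_0+\delta}$) is controlled; producing that control a priori from small $L^p$-initial data, $p>2$, is exactly the open problem described in the surrounding text, since the spectral-gap mechanism of Lemma \ref{spectral-gap} is tied to the $L^2$-weighted structure and does not obviously survive in $L^{p_0}$. A secondary technical point to handle carefully is the borderline case in the Gagliardo--Nirenberg interpolation when $2(\mu+p-1)/p$ hits the Sobolev exponent $2d/(d-2)$ (for $d\ge 3$), which is why $d\ge 3$ is assumed and why the two hypotheses \eqref{assump-1}--\eqref{assump-2} are stated separately: \eqref{assump-1} gives a clean smallness absorption at the critical exponent, whereas \eqref{assump-2} trades criticality for a slightly larger integrability exponent and a time-dependent (but locally bounded) constant. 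Finally, for the convergence-to-equilibrium part one would, as in \eqref{interpolation}, combine the $C_T$-polynomial growth of a higher Sobolev norm with exponential $L^{p_0}$-decay (which, when available in the setting of \eqref{assump-1}, can be propagated just as in Lemma \ref{decay-L2}) to conclude exponential decay in $L^\infty$; but the proposition as stated asserts only global existence, so this last interpolation is optional.
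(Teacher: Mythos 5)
Your proposal is correct and follows essentially the same route as the paper: the same $L^p$-energy estimate leading to \eqref{e6}, the same observation that $p_0=\frac{d(\mu-1)}{2}$ is exactly critical (your Gagliardo--Nirenberg step on $|v_i|^{p/2}$ is equivalent to the paper's Sobolev embedding $H^1\hookrightarrow L^{2d/(d-2)}$ followed by Lebesgue interpolation between $L^{p_0}$ and $L^{dp/(d-2)}$, yielding the same exponent $p$ on the dissipative term), the same absorption by smallness under \eqref{assump-1} versus Young's inequality under \eqref{assump-2}, and the same conclusion via Lemma \ref{heat-regularity-2}. The only inessential difference is your mention of bootstrapping in $p$, which is not needed since the estimate closes independently for each $p\geq p_0$ using only the hypothesised $L^{p_0}$ (or $L^{p_0+\delta}$) control.
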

\begin{remark}
	In the case, for instance, $d=4$ and $\mu=2$, which implies $p_0 = 2$, the assumption \eqref{assump-1} in fact can be proved thanks to a spectral gap of the linearised operator (see Lemma \ref{decay-L2}).
\end{remark}
\begin{proof}
	Let $p\geq p_0$ arbitrary. Multiplying \eqref{Taylor} by $p|v_i|^{p-2}v_i$ and proceed as the proof of Theorem \ref{theo:main} we obtain \eqref{e6}, which we rewrite here 
	\begin{equation*}
		\frac{d}{dt}\sum_{i=1}^{N}\|v_i\|_p^p + C\sum_{i=1}^{N}\|v_i^{p/2}\|_{H^1(\Omega)}^2 \leq C\sum_{i=1}^{N}\|v_i\|_{\mu+p-1}^{\mu+p-1} + C.
	\end{equation*}
	By the Sobolev embedding $H^1(\Omega) \hookrightarrow L^{2d/(d-2)}(\Omega)$, recalling that $d\geq 3$, we have
	\begin{equation*}
		\|v_i^{p/2}\|_{H^1(\Omega)}^2 \geq C\|v_i^{p/2}\|_{2d/(d-2)}^2 = C\|v_i\|_{dp/(d-2)}^p =: C\|v_i\|_q^p
	\end{equation*}
	with $q = \frac{dp}{p-2}$. Therefore we obtain
	\begin{equation}\label{e7}
		\frac{d}{dt}\sum_{i=1}^{N}\|v_i\|_p^p + C\sum_{i=1}^{N}\|v_i\|_{q}^p \leq C\sum_{i=1}^{N}\|v_i\|_{\mu+p-1}^{\mu+p-1} + C.		
	\end{equation}
	Note that $p \geq p_0 = \frac{d(\mu-1)}{2}$ implies $q = \frac{dp}{d-2} \geq \mu+p-1$ and $\mu+p-1 \geq p_0$ since $\mu \geq 1$. 
	
	\medskip
	We consider first the case when the assumption \eqref{assump-1} is satisfied, we use the interpolation inequality to have, recalling that $p_0 \leq \mu + p - 1 \leq q$,
	\begin{equation}\label{interpolation-1}
		\|v_i\|_{\mu+p-1}^{\mu+p-1} \leq \|v_i\|_q^{\theta(\mu+p-1)}\|v_i\|_{p_0}^{(1-\theta)(\mu+p-1)}
	\end{equation}
	with $\theta\in (0,1)$ satisfies
	\begin{equation*}
		\frac{1}{\mu+p-1} = \frac{\theta}{q} + \frac{1-\theta}{p_0}\quad \Longrightarrow\quad \theta = \frac{q[(\mu+p-1) - p_0]}{(q-p_0)(\mu+p-1)}.
	\end{equation*}
	From $q = dp/(d-2)$ and $p_0 = \frac{d(\mu-1)}{2}$ we can easily verify that
	\begin{equation*}
		\theta(\mu+p-1) = \frac{q[(\mu+p-1) - p_0]}{q-p_0} = p.
	\end{equation*}
	Therefore
	\begin{equation*}
		\|v_i\|_{\mu+p-1}^{\mu+p-1} \leq \|v_i\|_q^{p}\|v_i\|_{p_0}^{(1-\theta)(\mu+p-1)}
	\end{equation*}
	Inserting this into \eqref{e7} we obtain
	\begin{equation*}
		\frac{d}{dt}\sum_{i=1}^{N}\|v_i\|_p^p + C\sum_{i=1}^{N}\|v_i\|_q^p \leq C\sum_{i=1}^{N}\|v_i\|_q^p\|v_i\|_{p_0}^{(1-\theta)(\mu+p-1)} + C.
	\end{equation*}
	Thanks to \eqref{assump-1}, with $\varepsilon$ is small enough we obtain
	\begin{equation*}
		\frac{d}{dt}\sum_{i=1}^{N}\|v_i\|_p^p \leq C
	\end{equation*}
	or consequently
	\begin{equation*}
		\sum_{i=1}^{N}\|v_i(T)\|_p^p \leq \sum_{i=1}^{N}\|v_{i,0}\|_p^p + CT
	\end{equation*}
	for arbitrary $p\geq p_0$. By applying Lemma \ref{heat-regularity-2} we obtain the global existence of classical solutions. 
	
	\medskip
	In the case when \eqref{assump-2} holds, we will modify the interpolation inequality \eqref{interpolation-1} as follow
	\begin{equation*}
		\|v_i\|_{\mu+p-1}^{\mu+p-1}\leq \|v_i\|_{q}^{\theta(\mu+p-1)}\|v_i\|_{p_0+\delta}^{(1-\theta)(\mu+p-1)}
	\end{equation*}
	with $\theta \in (0,1)$ solving
	\begin{equation*}
		\frac{1}{\mu+p-1} = \frac{\theta}{q} + \frac{1-\theta}{p_0+\delta} \quad \Longrightarrow \quad  \theta = \frac{q[(\mu+p-1) - p_0 - \delta]}{(q-p_0 - \delta)(\mu+p-1)}.
	\end{equation*}
	Using again $q = dp/(d-2)$ and $p_0 = d(\mu-1)/2$ we can check that
	\begin{equation*}
		\theta(\mu+p-1) = \frac{q[(\mu+p-1) - p_0 - \delta]}{q - p_0 - \delta} < p.
	\end{equation*}
	Therefore, an application of Young's inequality leads to 
	\begin{equation*}
		\|v_i\|_{\mu+p-1}^{\mu+p-1}\leq \|v_i\|_{q}^{\theta(\mu+p-1)}\|v_i\|_{p_0+\delta}^{(1-\theta)(\mu+p-1)} \leq \kappa \|v_i\|_{q}^{p} + C_{\kappa}\|v_i\|_{p_0+\delta}^{(1-\theta)[\theta(\mu+p-1) - p]/\theta}
	\end{equation*}	
	in which $\kappa>0$ can be chosen as small as possible. Inserting this into \eqref{e7} and choosing $\kappa$ small enough we arrive at
	\begin{equation*}
		\frac{d}{dt}\sum_{i=1}^{N}\|v_i\|_p^p \leq C\sum_{i=1}^{N}\|v_i\|_{p_0+\delta}^{(1-\theta)[\theta(\mu+p-1)-p]/\theta} + C \leq C\widehat C(t)^{(1-\theta)[\theta(\mu+p-1)-p]/\theta} + C
	\end{equation*}
	due to the assumption \eqref{assump-2}. While $\widehat{C}(t)$ is continuous we obtain finally
	\begin{equation*}
		\sum_{i=1}^{N}\|v_i(T)\|_p^p \leq \sum_{i=1}^{N}\|v_{i,0}\|_p^p + C\int_0^T\widehat C(t)^{(1-\theta)[\theta(\mu+p-1)-p]/\theta}dt + CT < +\infty,
	\end{equation*}
	for arbitrary $p\geq p_0+\delta$, which together with Lemma \ref{heat-regularity-2} implies the global existence of classical solutions. The proof of Proposition \ref{pro:extension} is thus complete.
\end{proof}

Note that Proposition \ref{pro:extension} provides only the global existence of classical solutions but not the uniform-in-time bound as in Theorem \ref{theo:main}. The reason is that, in contrast to the case of small $L^2$-initial data, one does not have in general the convergence of solution $u$ to the complex balanced equilibrium $u_{\infty}$, or equivalently the decay of $v = u - u_{\infty}$ to zero, except the case when the considered system is complex balanced and has no boundary equilibria, then the solutions are proved to converge to $u_{\infty}$ in $L^1$-norm without the assumption of initial data being close to $u_{\infty}$ (see e.g. \cite{DFT16,FT17,PSZ16}).
\begin{corollary}
	Assume that the system \eqref{system} is complex balanced and has no boundary equilibria. Suppose that all the assumptions in Proposition \ref{pro:extension} are satisfied with the function $\widehat{C}(t)$ in \eqref{assump-2} grows at most polynomially. Assume moreover that the domain $\Omega$ is regular enough, i.e. $\partial\Omega$ is of class $C^{\infty}$. Then the classical solution to \eqref{system} exists globally and converges to the complex balanced equilibrium in $L^{\infty}$-norm.
\end{corollary}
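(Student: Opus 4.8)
The plan is to derive the statement by combining three ingredients: the global existence and polynomial-in-time estimates furnished by Proposition \ref{pro:extension}, a bootstrap that promotes these to polynomial-in-time bounds in high Sobolev norms, and the known \emph{exponential} convergence in $L^1$-norm for complex balanced systems without boundary equilibria; an interpolation of the type \eqref{interpolation} will then upgrade $L^1$-decay to $L^\infty$-decay.

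First, I would apply Proposition \ref{pro:extension}. Since its hypotheses hold, the classical solution $v$ to \eqref{Taylor} (hence $u$ to \eqref{system}) is global, and, reading off the estimate in its proof, for every $p\ge p_0+\delta$ one has
\[
\sum_{i=1}^{N}\|v_i(T)\|_p^p \le \sum_{i=1}^{N}\|v_{i,0}\|_p^p + C\int_0^T\widehat C(t)^{\gamma_p}\,dt + CT
\]
for some exponent $\gamma_p>0$. As $\widehat C(t)$ grows at most polynomially by assumption, the right-hand side grows at most polynomially in $T$, i.e. $\|v_i(T)\|_p\le C_T$ for every such $p$, in the notation of Section \ref{sec:main}. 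Plugging this into Lemma \ref{heat-regularity-2} gives $\|v_i\|_{L^\infty(Q_{t_0,T})}\le C_T$. Then, arguing as in the four-dimensional case of the proof of Theorem \ref{theo:main}, I would bootstrap: writing $\partial_t v_i-d_i\Delta v_i=f_i(v+u_\infty)$ and using the growth condition \eqref{growth} together with the estimates already obtained, one checks inductively that $f_i(v+u_\infty)$ and its space-time derivatives up to any fixed order lie in $L^2(Q_{t_0,T})$ with norms controlled by $C_T$, and parabolic regularity then yields $\|v_i(T)\|_{H^k(\Omega)}\le C_T$ for every fixed $k\ge1$; the assumption $\partial\Omega\in C^\infty$ is what allows the iteration to be carried out arbitrarily many times.

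Second, I would invoke the convergence theory for complex balanced systems without boundary equilibria: by \cite{DFT16,FT17,PSZ16} the global solution converges exponentially to $u_\infty$ in $L^1$, so there are $C>0$, $\lambda_1>0$ with $\sum_{i=1}^{N}\|v_i(t)\|_{L^1(\Omega)}\le Ce^{-\lambda_1 t}$ for all $t>0$ (obtained via an entropy--entropy production inequality together with the Csisz\'ar--Kullback--Pinsker inequality). Finally, choosing $k>d/2$ so that $H^k(\Omega)\hookrightarrow L^\infty(\Omega)$, the Gagliardo--Nirenberg interpolation inequality gives some $\theta\in(0,1)$ with
\[
\|v_i(T)\|_\infty \le C\|v_i(T)\|_{H^k(\Omega)}^{\theta}\|v_i(T)\|_{L^1(\Omega)}^{1-\theta} \le C_T^{\theta}e^{-\lambda_1(1-\theta)T} \le Ce^{-\lambda' T}
\]
for any $0<\lambda'<\lambda_1(1-\theta)$, since $C_T$ is only polynomial in $T$; combined with the bound on $[0,t_0]$ from Theorem \ref{local} this gives $\sum_i\|u_i(t)-u_{i,\infty}\|_\infty\le Ce^{-\lambda' t}$ for all $t>0$, which proves (even strengthens) the corollary.

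The step I expect to be the main obstacle is the bootstrap producing the high-order Sobolev bounds: one must track, stage by stage, that the constants stay polynomial in $T$ rather than blowing up exponentially — it is precisely here that the at-most-polynomial growth of $\widehat C(t)$ and the polynomial structure of the nonlinearities \eqref{nonlinearties} are used, and the $C^\infty$ regularity of $\partial\Omega$ removes any ceiling on the number of iterations. A minor additional point is to quote the exponential $L^1$-convergence in a form valid for the classical (not merely renormalised) solutions at hand, which is automatic once global classical solutions are known.
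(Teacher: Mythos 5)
Your proposal is correct and follows essentially the same route as the paper: global existence and polynomial-in-$T$ bounds from Proposition \ref{pro:extension} (using the polynomial growth of $\widehat C(t)$), a bootstrap to $\|u_i(T)\|_{H^k(\Omega)}\le C_T$ enabled by the $C^\infty$ regularity of $\partial\Omega$, the exponential $L^1$-convergence from \cite{DFT16,FT17} in the absence of boundary equilibria, and a Gagliardo--Nirenberg interpolation to conclude exponential $L^\infty$-convergence. You simply spell out the bootstrap and interpolation steps in more detail than the paper's terse argument.
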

\begin{proof}
	By Proposition \ref{pro:extension} and the assumption that $\widehat{C}(t)$ grows at most polynomially, we obtain for the classical solution to \eqref{system} that for all $i=1,\ldots, N$
	\begin{equation*}
		\|u_i(T)\|_{\infty} \leq C_T
	\end{equation*}
	and consequently, 
	\begin{equation*}
		\|u_i(T)\|_{H^k(\Omega)} \leq C_T \text{ for all } T\geq t_0 > 0,
	\end{equation*}
	for any $k\in \mathbb N$ and for some $t_0>0$, where $C_T$ is a constant grows at most polynomially in $T$, thanks to the regularity of $\Omega$ and arguments similarly to the proof of Theorem \ref{theo:main}. Since the system is assumed to have no boundary equilibria, it was proved in \cite{DFT16,FT17} that
	\begin{equation*}
		\sum_{i=1}^{N}\|u_i(t) - u_{i,\infty}\|_{1} \leq Ce^{-\lambda t} \quad \text{ for all } \quad t>0.
	\end{equation*}
	Therefore an application of the Gagliardo-Nirenberg inequality concludes the Corollary.
\end{proof}

\noindent{\bf Acknowledgements.} We would like to thank the referee for helpful suggestions which improve the presentation of the paper.

This work is partially supported by International Training Program IGDK 1754 and NAWI Graz.

\end{document}